\documentclass[11pt]{article}
\pdfoutput=1
\usepackage{amsmath,amssymb,amsthm,dsfont,algorithm,algorithmic,xcolor,epstopdf,bm,graphicx}
\usepackage[margin=0.75in]{geometry}

\usepackage[normalem]{ulem}

\usepackage[caption=false]{subfig}

\newcommand{\dx}[1]{\,\mathrm{d}#1}
\newcommand{\pd}[2]{\frac{\partial #1}{\partial #2}}
\newcommand{\copyrightStatement}{Official contribution of the National Institute of Standards and Technology; not subject to copyright in the United States.}

\newtheorem{remark}{Remark}
\newtheorem{assumption}{Assumption}
\newtheorem{theorem}{Theorem}
\newtheorem{corollary}{Corollary}
\newtheorem{lemma}{Lemma}

\title{A sparsity-constrained sampling method with applications to communications and inverse scattering\thanks{\copyrightStatement}}
\author{I. Harris\thanks{Department of Mathematics, Purdue University, West Lafayette, IN
(harri814@purdue.edu)} \and J.D. Rezac\thanks{Communications Technology Laboratory, National Institute of 
Standards and Technology, Boulder, CO (jacob.rezac@nist.gov)}}
\date{\today}

\begin{document}

\maketitle

\begin{abstract}
We introduce the sparse direct sampling method (DSM) to estimate properties of a region from signals that probe the region. We demonstrate the sparse-DSM on two separate problems: estimating both the angle-of-arrival of a radio wave impinging on an array and the location and shape of an inhomogeneity from scattered acoustic waves. The sparse-DSM is qualitative in nature, so it does not require the simulation of a forward problem to solve the inverse problem. The method generalizes of two older qualitative methods, one which has low-resolution reconstructions but uses few measurements and one which is high-resolution but has higher measurement cost. The sparse-DSM inherits positive qualities from both. We demonstrate the technique on measured and simulated examples. 
\end{abstract}

\section{Introduction}
Many classic inverse problems involve estimating physical properties of a region by analyzing the dynamics of signals used to probe the area. Qualitative (or ``direct'') inversion methods solve this type of problem by exploiting analytical properties of the signals and hence avoid simulating the measurement process. In this way, they are typically much faster than iterative optimization approaches - though they also reveal less information about the unknowns and often require more high-quality measurement data \cite{CakoniColton2014,CakoniColtonHaddar2016}. In this article, we develop a new qualitative method, the \textit{sparse direct sampling method (sparse-DSM)}, which flexibly balances measurement requirements with the resolution of estimates.

The two problems we analyze with sparse-DSM here are prototypical test cases for qualitative methods. The angle-of-arrival (AOA) estimation problem (also known as the direction-of-arrival problem) is to determine the direction from which incoming radio waves impinge on an array of receiving antennas. This information is used, for example, by communications devices to electrically steer their antennae in the direction of nearby communications towers \cite{AndrewsEtAl2014, RanganEtAl2014,RohEtAl2014}. The inverse scattering problem is to estimate the location and shape of hidden obstacles by transmitting waves through a region-of-interest and measuring the resulting scattered field. This has uses in, for example, non-destructive testing and mineral prospecting \cite{CakoniColton2014}. While these are superficially different problems, we demonstrate that nearly identical qualitative inversion techniques are available for their study.

Qualitative methods have attracted much attention in the inverse scattering community since the late 1990s \cite{ColtonKirsch1996}, though simplified versions of these ideas have been present in signal processing research since at least the late 1940s \cite{Bartlett1948}. Loosely-speaking, there are two categories of qualitative methods: those based on the asymptotic behavior of post-processed data, and those based on algebraic and analytical properties of a physical model which describes the measurement data. Examples of the former class are conventional beamforming \cite{Bartlett1948, VanVeenBuckley1988} and the direct sampling method (DSM) \cite{Ito2012}. They are computationally very fast, require little measurement data, and are stable with respect to high amounts of noise. However, they often produce low-resolution estimates. We will generically refer to them as direct sampling methods. The second class contains Capon's beamformer \cite{Capon1969}, the inf-criterion \cite{Kirsch1998}, the multiple signal classification (MUSIC) algorithm \cite{Schmidt1986}, the factorization method \cite{Kirsch1998, KirschGrinberg2008}, and the linear sampling method (LSM) \cite{CakoniColton2014, ColtonKirsch1996}. These methods usually produce high-resolution estimates, but are slower, usually require a lot of measurement data, and have less stability with respect to noise. 

The sparse-DSM is a generalization of methods from the two categories which produces high-resolution results under modest measurement requirements. Indeed for AOA estimation problems, beamforming can be seen as the sparsest solution to Capon's beamformer in a sense we make clear below. The same relationship holds between the DSM and the inf-criterion in the inverse-scattering context. We transition between these classical inversion techniques by looking for solutions less sparse than beamforming (or the DSM) but still more sparse than Capon's beamformer (or the inf-criterion). We show that the sparse-DSM inherits positive qualities from both classical inversion techniques in the example inverse problems we analyze, yielding high-resolution reconstructions from limited and noisy measurement data. Since the sparse-DSM is related to classical techniques, we anticapate that it will perform well for other applications where qualitative methods have been developed, such as electromagnetic and elastic inverse scattering \cite{ArensJiLiu2020,BourgeoisLuneville2013,CakoniColtonMonk2011,CharalambopoulosEtAl2003,CharalambopoulosEtAl2007, HarrisNguyen2019,Ito2013,Nguyen2019,PourahmadianGuzinaHaddar2017} and point estimation problems in communications, seismology, and small-obstacle scattering \cite{AmariEtAl2013, Hokanson2013, PeterPlonka2013, RostThomas2002, VanTrees2002}.

Our presentation of these ideas begins by first introducing two inf-criteria in Section \ref{sec:sparseInfCrit} which will be the basis for the sparse-DSM. A subspace-constrained solution to the optimization problems in these inf-criteria reduces to simple functionals which have been used in the past as the basis for beamforming and the DSM, as described in Section \ref{sec:beamforming}. Based on this observation, we continue in Section \ref{sec:sparseDSM} to define the sparse-constrained optimization problem which we call the sparse-DSM. In Section \ref{sec:AOA}, we describe the AOA-estimation problem and how to apply sparse-DSM to estimate its solution. The same is done in Section \ref{sec:inverseScattering} for the acoustic inverse scattering problem. Based on these models we provide a collection of reconstruction results in Section \ref{sec:numerics} from both simulated and measured data that demonstrate the applicability of sparse-DSM as well as some of its positive and negative properties. The conclusion in Section \ref{sec:conclusion} outlines a 
potential future research areas related to the sparse-DSM. 

Throughout the article, we will use notation typical in linear algebra and functional analysis. For finite-dimensional objects, bold letters (e.g., $\mathbf{A}$ or $\mathbf{v}$) distinguish matrices and vectors from scalars. Caligraphic font (e.g., $\mathcal{A}$) is used to denote linear operators on infinite-dimensional spaces. The range and nullspace of an operator are indicated by $\mathcal{R}(\cdot)$ and $\mathcal{N}(\cdot)$ respectively and the orthogonal complement of a set $W$ is indicated by $W^{\perp}$. When working on an inner-product space $X$, we denote its norm by $\|\cdot\|_X$ and inner-product by $(\cdot,\cdot)_X$. The operator-norm of a linear operator mapping between spaces $X$ and $Y$ is denoted $\|\cdot\|_{X\to Y}$ while the Frobenius matrix norm is denoted $\|\cdot\|_F$. In the special case of the finite dimensional Euclidean norm, we use $\|\cdot\|_2$ and $(\cdot,\cdot)_2$. The adjoint of an operator is denoted by $\cdot^*$. Finally, normalized elements are defined by a carrot over the element, e.g., $\hat{g}:=g/\|g\|$. 

\begin{remark} 
We are not the first to join sparse optimization and qualitative inversion. As an incomplete list, see 
\cite{AlqadahEtAl2011, AlqadahEtAl2011b, AlqadahValdivia2013, Alqadah2016, ChaiMoscosoPapanicolaou2013, ChaiMoscosoPapanicolaou2014, Fannjiang2010, LeeBresler2012, KimLeeYe2012, MalioutovCetinWillsky2005}. In such studies, qualitative methods are enhanced through sparse regularization or an a priori assumption of a sparse solution. While often yielding impressive results, this differs greatly from our use of sparsity to transition between different qualitative methods.
\end{remark}

\section{Infimum criteria}\label{sec:sparseInfCrit}
We begin our discussion by presenting two similar theorems which relate the range of an operator to an optimization problem involving that operator. The two theorems differ in assumptions needed for their application, leading to small differences in the optimization problem. We will see later in the article that one of these theorems applies to point parameter reconstruction, while the other is needed for reconstructing parameters with larger support. This distinction is needed because in the AOA problem, we only look for point unknowns (the angle from which a wave impinges on an array), while in the inverse scattering problem we look for continuous unknowns (the shape and location of hidden objects). Indeed, in the simplest case, the sparse-DSM applies to measured data of the form 
\begin{equation}\label{eqn:sparseSum}
 f(x,y) = \sum_{j=1}^J \alpha_j \psi(x;z_j)\varphi(y;z_j), \qquad x\in\Gamma_x, \quad y\in\Gamma_y
\end{equation}
where $z_j\in\mathbb{R}^d$ ($d>0$) are the $J>0$ unknowns-of-interest, $\alpha_j\in\mathbb{C}$ are unknown scalars, and $\psi$ and $\varphi$ are functions whose form is known. The sets  $\Gamma_x\times\Gamma_y:=\left\{(x_m,y_n)\right\}_{m,n=1}^{M,N}\subseteq\mathbb{R}^{d_x}\times\mathbb{R}^{d_y}$  ($d_x,d_y>0$) describe a rectilinear measurement geometry. The model \eqref{eqn:sparseSum} applies to AOA estimation and inverse scattering under a single-scattering approximation as discussed in this article, and there is further a large group of applications in which a measured signal is modeled as a linear superposition of known functions with unknown parameters. An important one for applications is exponential fitting and its generalizations \cite{Hokanson2013, PeterPlonka2013}.

Note that the data matrix $[\mathbf{L}]_{m,n}=f(x_m,y_n)$ can be factored in two different ways, $\mathbf{L}=\mathbf{P}\mathbf{Q}=\mathbf{R}\mathbf{A}\mathbf{Q}$ where $[\mathbf{R}]_{m,j} = \psi(x_m;z_j)$, $[\mathbf{Q}]_{j,n} = \varphi(y_n;z_j)$, $\mathbf{A}$ is the diagonal matrix so that $[\mathbf{A}]_{j,j}=\alpha_j$, and $\mathbf{P}=\mathbf{R}\mathbf{A}$. A similar factorization is availible when the unknowns are not represented by points in space for the inverse scattering problem. In this case, we analyze the linear operator $\mathcal{L}:X\to Y$ mapping between Hilbert spaces $X$ and $Y$, and factored as either $\mathcal{L}=\mathcal{P}\mathcal{Q}$ or $\mathcal{L}=\mathcal{R}\mathcal{A}\mathcal{Q}$. Here, $\mathcal{P},\mathcal{R}:Z\to Y$, $\mathcal{Q}:X\to Z$, and $\mathcal{A}:Z\to Z$ are also linear operators and $Z$ is again a Hilbert space. The case when $\mathcal{R}=\mathcal{Q}^*$ and $X=Y$ is relevant for the applications we consider here.

\subsection{Two infimum criteria}
As a first step towards developing the sparse-DSM, we relate a factorization of measured data into a product of linear operators to a functional which can be calculated from measurement data. Note that the remainder of this Section is included primarily for completeness and to introduce notation and a reader familiar with infimum-criteria may move to Section \ref{sec:sparseDSM} where the sparse-DSM is introduced. Indeed, while Theorem \ref{thm:boundedInfCrit} and Corollary \ref{cor:pointEstimationInf} are new to the best of our knowledge, they are similar to Theorem \ref{thm:coerciveInfCrit} and Corollary \ref{cor:extendedEstimationInf} which we adopt from \cite{Kirsch1998}. Each of these relates the range of an operator to an optimization problem; the range of this operator will be shown to be related to the unknowns of our applications of interest in Sections \ref{sec:AOA} and \ref{sec:inverseScattering} - thus relating unknowns to an optimization problem which can be solved to estimate the unknowns.

\begin{theorem}\label{thm:boundedInfCrit}
Let $X,$ $Y,$ and $Z$ be Hilbert spaces and $\mathcal{L}:X\to Y$ be a bounded linear operator. Assume $\mathcal{L}$ can be factored as 
$\mathcal{L}=\mathcal{P}\mathcal{Q}$ where $\mathcal{P}:Z\to Y$ and $\mathcal{Q}: X\to Z$ are bounded and linear operators. Assume $\mathcal{P}$ is also bounded below in the sense that there exists a constant $c>0$ so that $c\|z\|_Z \leq \|\mathcal{P}z\|_Y$ for every $z\in Z$. Then, for any $\varphi\in X$, 
\begin{equation*}
\varphi\in\mathcal{R}(\mathcal{Q}^*) \qquad \text{ if and only if } \qquad \mathcal{I}^{(0)}(\varphi):= \inf_{g\in X} \left\{ \|\mathcal{L}g\|_Y \, : \, (g,\varphi)_X=1\right\} > 0. 
\end{equation*}
\end{theorem}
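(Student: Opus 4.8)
The plan is to strip the factorization down to its essential content and then invoke a standard range characterization for adjoints. First I would use that $\mathcal{P}$ is bounded below to record the two-sided estimate $c\|\mathcal{Q}g\|_Z \le \|\mathcal{L}g\|_Y \le \|\mathcal{P}\|_{Z\to Y}\|\mathcal{Q}g\|_Z$, valid for every $g\in X$ since $\mathcal{L}g=\mathcal{P}(\mathcal{Q}g)$. Restricting to the admissible set $\{g:(g,\varphi)_X=1\}$ and taking the infimum, this estimate shows that $\mathcal{I}^{(0)}(\varphi)>0$ precisely when $\inf\{\|\mathcal{Q}g\|_Z:(g,\varphi)_X=1\}>0$. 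Thus the lower bound on $\mathcal{P}$ is used only to replace $\mathcal{L}$ by $\mathcal{Q}$, and the theorem reduces to a statement about $\mathcal{Q}$ and its adjoint alone.

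I would then prove the equivalence of three conditions on $\varphi$: (i) $\inf\{\|\mathcal{Q}g\|_Z:(g,\varphi)_X=1\}>0$; (ii) there is a constant $C>0$ with $|(g,\varphi)_X|\le C\|\mathcal{Q}g\|_Z$ for all $g\in X$; and (iii) $\varphi\in\mathcal{R}(\mathcal{Q}^*)$. The equivalence (i)$\Leftrightarrow$(ii) is routine: if (ii) holds then every admissible $g$ obeys $1=|(g,\varphi)_X|\le C\|\mathcal{Q}g\|_Z$, so the infimum is at least $1/C$; conversely, if (i) holds with value $m>0$, then any $g$ with $(g,\varphi)_X\neq 0$ may be rescaled to satisfy the constraint, which yields $|(g,\varphi)_X|\le m^{-1}\|\mathcal{Q}g\|_Z$, the case $(g,\varphi)_X=0$ being trivial. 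The implication (iii)$\Rightarrow$(ii) is immediate from Cauchy--Schwarz: writing $\varphi=\mathcal{Q}^*w$ gives $(g,\varphi)_X=(\mathcal{Q}g,w)_Z$, so $C=\|w\|_Z$ works.

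The main obstacle is the remaining implication (ii)$\Rightarrow$(iii), which is the only step needing genuine functional analysis. Here I would use the bound in (ii) to define a linear functional $\Lambda$ on the range $\mathcal{R}(\mathcal{Q})\subseteq Z$ by $\Lambda(\mathcal{Q}g):=(g,\varphi)_X$. The bound makes $\Lambda$ well defined, because $\mathcal{Q}g_1=\mathcal{Q}g_2$ forces $|(g_1-g_2,\varphi)_X|\le C\|\mathcal{Q}(g_1-g_2)\|_Z=0$, and it makes $\Lambda$ bounded with $\|\Lambda\|\le C$. Extending $\Lambda$ by continuity to $\overline{\mathcal{R}(\mathcal{Q})}$ and applying the Riesz representation theorem produces $w\in Z$ with $(\mathcal{Q}g,w)_Z=(g,\varphi)_X$ for every $g\in X$; by definition of the adjoint this reads $(g,\mathcal{Q}^*w-\varphi)_X=0$ for all $g$, hence $\varphi=\mathcal{Q}^*w\in\mathcal{R}(\mathcal{Q}^*)$. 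Chaining (i)$\Leftrightarrow$(ii)$\Leftrightarrow$(iii) with the reduction from the first paragraph gives both directions of the theorem. Finally I would dispose of the degenerate case $\varphi=0$ separately: the constraint set is then empty, so $\mathcal{I}^{(0)}(0)=+\infty>0$ under the convention $\inf\emptyset=+\infty$, consistent with $0\in\mathcal{R}(\mathcal{Q}^*)$.
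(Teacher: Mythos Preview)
Your proof is correct and complete, but it takes a genuinely different route from the paper for the harder direction. Both proofs begin with the same two-sided estimate $c\|\mathcal{Q}g\|_Z\le\|\mathcal{L}g\|_Y\le\|\mathcal{P}\|\,\|\mathcal{Q}g\|_Z$ to reduce everything to $\mathcal{Q}$, and both handle the implication $\varphi\in\mathcal{R}(\mathcal{Q}^*)\Rightarrow\mathcal{I}^{(0)}(\varphi)>0$ via Cauchy--Schwarz. The divergence is in the converse. The paper argues by contrapositive and density: assuming $\varphi\notin\mathcal{R}(\mathcal{Q}^*)$, it shows that $\mathcal{Q}(\mathcal{P}_\varphi)$ is dense in $\mathcal{R}(\mathcal{Q})$ by computing $\mathcal{Q}(\mathcal{P}_\varphi)^\perp=\mathcal{N}(\mathcal{Q}^*)=\mathcal{R}(\mathcal{Q})^\perp$, and then explicitly builds a sequence $g_n=\hat\varphi-h_n$ with $\|\mathcal{Q}g_n\|_Z\to 0$. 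You instead prove the direct implication by recasting the positive-infimum condition as a uniform bound $|(g,\varphi)_X|\le C\|\mathcal{Q}g\|_Z$, which lets you define a bounded linear functional on $\mathcal{R}(\mathcal{Q})$ and invoke Riesz to produce a preimage $w$ with $\mathcal{Q}^*w=\varphi$. Your argument is the standard Douglas-type range characterization and is arguably cleaner; the paper's density argument is more hands-on and has the advantage of exhibiting the minimizing sequence explicitly, which aligns with the paper's later discussion of constrained subsets $\chi\subseteq\mathcal{P}_\varphi$ in the sparse-DSM.
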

\begin{proof}
Since $\mathcal{P}$ is bounded below and continuous, the factorization of $\mathcal{L}$ shows
\begin{equation}\label{eqn:lowerBound}
c\|\mathcal{Q}g\|_Z \leq \|\mathcal{L} g\|_Y \leq \|\mathcal{P}\|_{Z\to Y}\|\mathcal{Q}g\|_Z, \quad \forall g\in X.
\end{equation}

Assume that $\varphi\in\mathcal{R}(\mathcal{Q}^*)$ with some nonzero $\phi\in Z$ so that $\mathcal{Q}^*\phi = \varphi$. Then any $g\in X$ satisfying $(g,\varphi)_X=1$ also satisfies $(\mathcal{Q}g,\phi)_Z=1$. Applying \ref{eqn:lowerBound} to such a $g$ (say $g=\varphi/\|\varphi\|^2_X$) demonstrates that
\begin{equation*}
\|\mathcal{L}g\|_Y \geq c\|\mathcal{Q}g\|_Z = \frac{c}{\|\phi\|_Z}\|\mathcal{Q}g\|_Z\|\phi\|_Z \geq \frac{c}{\|\phi\|_Z} \left|\left(\mathcal{Q}g,\phi\right)_Z\right| = \frac{c}{\|\phi\|_Z}>0. 
\end{equation*}

On the other hand, assume $\varphi\notin \mathcal{R}(\mathcal{Q}^*)$. We wish to demonstrate that $\mathcal{I}(\varphi)=0$. From the upper bound in \eqref{eqn:lowerBound}, it is sufficient to show there exists a sequence $g_n\in X$ with $(g_n,\varphi)_X=1$ and $\|\mathcal{Q}g_n\|_Z\to 0$ as $n\to\infty$. Equivalently, define $h_n =\hat{\varphi}-g_n$ and the  the set $\mathcal{P}_{\varphi} := \left\{\psi\in X \, : \, \left(\psi,\varphi\right)_X = 0\right\}$ and demonstrate the existence of $h_n\in \mathcal{P}_{\varphi}$ such that $\|\mathcal{Q}( \hat{\varphi}-h_n)\|_Z\to 0$. 

Since $\mathcal{Q}\hat{\varphi}\in\mathcal{R}(\mathcal{Q})$, the result follows by demonstrating that $\mathcal{Q}(\mathcal{P}_{\varphi})$ is dense in $\mathcal{R}(\mathcal{Q})$ - i.e., $\mathcal{Q}(\mathcal{P}_{\varphi})^\perp = \mathcal{R}(\mathcal{Q})^{\perp}$. Indeed, 
\begin{equation*}
\phi \in \mathcal{Q}(\mathcal{P}_{\varphi})^\perp \quad \text{ if and only if } \quad \left(\mathcal{Q}^*\phi, \psi\right)_X = 0 \quad \text{ for all } \quad \psi\in \mathcal{P}_\varphi. 
\end{equation*}
But this is equivalent to $\mathcal{Q}^*\phi\in\text{span}\{\varphi\}$. Since $\varphi\notin\mathcal{R}(\mathcal{Q})$, it follows that $\mathcal{Q}^*\phi = 0$. The equality $\mathcal{N}(\mathcal{Q}^*)=\mathcal{R}(\mathcal{Q})^\perp$ completes the proof.  
\end{proof}

\begin{remark}
The reader may note that, when $\mathcal{L}\mathcal{L}^*$ is invertible, $\mathcal{I}^{(0)}(\varphi)$ has the closed-form minimum
\begin{equation*}
g(y)  = \frac{\left(\mathcal{L}\mathcal{L}^*\right)^{-1}\varphi(y)}{\varphi(y)^*\left(\mathcal{L}\mathcal{L}^*\right)^{-1}\varphi(y)}. 
\end{equation*}
When expressed in terms of the AOA problem, this is typically called Capon's method \cite{Capon1969}. Unfortunately, in practice $\mathcal{L}$ is polluted by noise and some  form of generalized inversion is needed to find $g$. The sparse-DSM discussed in this paper can be considered a way to address this. 
\end{remark}

The assumption Theorem \ref{thm:boundedInfCrit} that $\mathcal{P}$ is bounded below is relatively restrictive. Indeed, a bounded linear operator acting between Hilbert spaces is bounded below if and only if it is injective with closed range. This will present difficulties for the inverse scattering problem, and we instead rely on the inf-criterion introduced in \cite{Kirsch1998}. We modify it here to better match the notation in Theorem \ref{thm:boundedInfCrit}. Note that the assumptions on $\mathcal{L}$ and the indicator functional require modification from Theorem \ref{thm:boundedInfCrit}. 

\begin{theorem}[Infimum-Criterion \cite{Kirsch1998}]\label{thm:coerciveInfCrit}
Let $\mathcal{L}=\mathcal{Q}^*\mathcal{A}\mathcal{Q}$ be a bounded linear operator mapping between a Hilbert space $X$ and itself, where $\mathcal{Q}: X\to Z$ and $\mathcal{A}:Z\to Z$ are bounded linear operators and $Z$ is another Hilbert space. Assume that $\mathcal{A}$ is coercive in the sense that there is a constant $c> 0$ such that for every $z\in \mathcal{R}(\mathcal{Q})\subseteq Z$, 
\begin{equation}\label{eqn:coerciveAssump}
c\|z\|^2_Z \leq |(z,Az)_Z|. 
\end{equation}
Then, for any non-zero $\varphi\in X$,
\begin{equation*}
\varphi\in\mathcal{R}(\mathcal{Q}^*)  \qquad \text{ if and only if } \qquad \mathcal{I}^{(1)}(\varphi):=\inf_{g\in X} \left\{|(g,\mathcal{L}g)_X|^2\,:\, (g,\varphi)_X=1 \right\}>0. 
\end{equation*}
\end{theorem}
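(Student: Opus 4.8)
The plan is to mirror the proof of Theorem \ref{thm:boundedInfCrit}, replacing the operator norm $\|\mathcal{L}g\|_Y$ by the quadratic form $|(g,\mathcal{L}g)_X|$ and replacing the bounded-below hypothesis on $\mathcal{P}$ by the coercivity hypothesis \eqref{eqn:coerciveAssump} on $\mathcal{A}$. The first step is to derive the two-sided estimate that plays the role of \eqref{eqn:lowerBound}. Writing $(g,\mathcal{L}g)_X = (g,\mathcal{Q}^*\mathcal{A}\mathcal{Q}g)_X = (\mathcal{Q}g,\mathcal{A}\mathcal{Q}g)_Z$ and noting that $\mathcal{Q}g\in\mathcal{R}(\mathcal{Q})$, the coercivity assumption supplies the lower bound $c\|\mathcal{Q}g\|_Z^2\leq|(g,\mathcal{L}g)_X|$, while boundedness of $\mathcal{A}$ together with the Cauchy--Schwarz inequality supplies the upper bound $|(g,\mathcal{L}g)_X|\leq\|\mathcal{A}\|_{Z\to Z}\|\mathcal{Q}g\|_Z^2$. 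Together these give
\[
c\|\mathcal{Q}g\|_Z^2 \leq |(g,\mathcal{L}g)_X| \leq \|\mathcal{A}\|_{Z\to Z}\|\mathcal{Q}g\|_Z^2, \qquad \forall g\in X,
\]
which is the quadratic analogue of \eqref{eqn:lowerBound} and reduces both directions of the equivalence to statements about the size of $\|\mathcal{Q}g\|_Z$ under the normalization $(g,\varphi)_X=1$.

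For the forward direction I would assume $\varphi=\mathcal{Q}^*\phi$ for some nonzero $\phi\in Z$. Any admissible $g$ then satisfies $(\mathcal{Q}g,\phi)_Z=(g,\mathcal{Q}^*\phi)_X=1$, so Cauchy--Schwarz forces $\|\mathcal{Q}g\|_Z\geq 1/\|\phi\|_Z$. Feeding this into the lower bound yields $|(g,\mathcal{L}g)_X|\geq c/\|\phi\|_Z^2$ uniformly over the constraint set, hence $\mathcal{I}^{(1)}(\varphi)\geq c^2/\|\phi\|_Z^4>0$. This is the same chain of inequalities used in Theorem \ref{thm:boundedInfCrit}, now squared because the functional here is $|(g,\mathcal{L}g)_X|^2$.

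The backward direction I would establish by contraposition, and this is where the real work sits, though it can be borrowed essentially verbatim from Theorem \ref{thm:boundedInfCrit} since it depends only on $\mathcal{Q}$. Assuming $\varphi\notin\mathcal{R}(\mathcal{Q}^*)$, the upper bound shows it is enough to produce a sequence $g_n$ with $(g_n,\varphi)_X=1$ and $\|\mathcal{Q}g_n\|_Z\to 0$. As before, such a sequence exists because $\mathcal{Q}(\mathcal{P}_\varphi)$ is dense in $\mathcal{R}(\mathcal{Q})$, where $\mathcal{P}_\varphi=\{\psi\in X:(\psi,\varphi)_X=0\}$: a vector $\phi$ lies in $\mathcal{Q}(\mathcal{P}_\varphi)^\perp$ if and only if $\mathcal{Q}^*\phi\in\mathrm{span}\{\varphi\}$, and since $\varphi\notin\mathcal{R}(\mathcal{Q}^*)$ this forces $\mathcal{Q}^*\phi=0$, so $\phi\in\mathcal{N}(\mathcal{Q}^*)=\mathcal{R}(\mathcal{Q})^\perp$; thus $\mathcal{Q}(\mathcal{P}_\varphi)^\perp=\mathcal{R}(\mathcal{Q})^\perp$ and the density follows.

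I expect the only genuinely delicate point to be the legitimacy of invoking coercivity, since \eqref{eqn:coerciveAssump} is assumed only on $\mathcal{R}(\mathcal{Q})$; one must observe that the relevant element $\mathcal{Q}g$ lies in that subspace by construction, which is what makes the lower bound valid. Everything else is a routine transcription of the earlier argument, the key structural observation being that exchanging $\|\mathcal{P}z\|_Y\geq c\|z\|_Z$ for $|(z,\mathcal{A}z)_Z|\geq c\|z\|_Z^2$ turns the relevant functional from a norm into a quadratic form while preserving exactly the two-sided control on $\|\mathcal{Q}g\|_Z$ that drives both implications.
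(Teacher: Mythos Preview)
The paper does not actually prove Theorem~\ref{thm:coerciveInfCrit}; it is quoted from \cite{Kirsch1998} and stated without proof. Your argument is correct and is precisely the natural adaptation of the paper's proof of Theorem~\ref{thm:boundedInfCrit}: the two-sided bound $c\|\mathcal{Q}g\|_Z^2\le|(g,\mathcal{L}g)_X|\le\|\mathcal{A}\|_{Z\to Z}\|\mathcal{Q}g\|_Z^2$ plays the role of \eqref{eqn:lowerBound}, and the density argument for the reverse implication is lifted verbatim since it depends only on $\mathcal{Q}$. Your observation that coercivity need only hold on $\mathcal{R}(\mathcal{Q})$ because $\mathcal{Q}g$ automatically lies there is exactly the point that makes the weaker hypothesis sufficient.
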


\subsection{Relationship between inf-criteria and unknowns}

We wish to relate unknowns in each inverse problem to the optimization problems in Theorems \ref{thm:boundedInfCrit} and \ref{thm:coerciveInfCrit}. This is straightforward when measurements are of the form \eqref{eqn:sparseSum}. 

\begin{corollary}[A min-criterion for point estimation]\label{cor:pointEstimationInf} 
Let $\mathbf{L}\in\mathbb{C}^{M\times N}$ be a matrix which can be factored as $\mathbf{L}=\mathbf{P}\mathbf{Q}$ with $\mathbf{P}\in\mathbb{C}^{M\times J}$ and $\mathbf{Q}\in\mathbb{C}^{J\times N}$.
\begin{enumerate}
\item Assume $\mathbf{P}$ is injective. Then for any vector $\bm{\varphi}\in\mathbb{C}^{N\times 1}$,
\begin{equation*}
\bm{\varphi}\in\mathcal{R}(\mathbf{Q}^*) \quad \text{ if and only if } \quad \mathbf{I}^{(0)}(\bm{\varphi}):=\min_{\mathbf{g}\in\mathbb{C}^{N\times 1}}\left\{\|\mathbf{L}\mathbf{g}\|^2_2  \,:\, (\mathbf{g},\bm{\varphi})_2=1\right\} >0. 
\end{equation*}
\item Assume data is measured according to model \eqref{eqn:sparseSum} so that $[\mathbf{L}]_{m,n} = f(x_m,y_n)$, $[\mathbf{P}]_{m,j} = \bm{\psi}(x_m;z_j)$, and $[\mathbf{Q}]_{j,n} = \alpha_j\bm{\varphi}(y_n;z_j)$. If $\{\bm{\psi}(x,z_j), x\in\Gamma_x\}_{j=1}^J$ and $\{\overline{\bm{\varphi}(y,z_j)}, y\in\Gamma_y\}_{j=1}^J$ are linearly independent as functions of $z_j$ then for any $z\in\mathbb{R}^d$,
\begin{equation*}
z=z_j \quad \text{ if and only if } \quad \mathbf{I}^{(0)}\left(\overline{\bm{\varphi}(\cdot;z)}\right)>0.
\end{equation*}
\end{enumerate}
\end{corollary}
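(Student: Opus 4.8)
The plan is to obtain Part 1 as the finite-dimensional specialization of Theorem \ref{thm:boundedInfCrit}, and then to derive Part 2 by computing $\mathcal{R}(\mathbf{Q}^{*})$ explicitly and invoking the linear-independence hypothesis. For Part 1, I would take $X=\mathbb{C}^{N}$, $Y=\mathbb{C}^{M}$, and $Z=\mathbb{C}^{J}$, each with the Euclidean inner product, and identify $\mathcal{L},\mathcal{P},\mathcal{Q}$ with $\mathbf{L},\mathbf{P},\mathbf{Q}$. The only hypothesis of Theorem \ref{thm:boundedInfCrit} that needs checking is that $\mathbf{P}$ is bounded below, and in finite dimensions this is automatic once $\mathbf{P}$ is injective: the smallest singular value $c$ of an injective $\mathbf{P}$ is strictly positive and satisfies $c\|\mathbf{z}\|_{2}\le\|\mathbf{P}\mathbf{z}\|_{2}$ for all $\mathbf{z}\in\mathbb{C}^{J}$ (equivalently, $\mathbf{z}\mapsto\|\mathbf{P}\mathbf{z}\|_{2}$ is a norm on $\mathbb{C}^{J}$, hence equivalent to $\|\cdot\|_{2}$). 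Theorem \ref{thm:boundedInfCrit} then yields $\bm{\varphi}\in\mathcal{R}(\mathbf{Q}^{*})$ iff $\inf\{\|\mathbf{L}\mathbf{g}\|_{2}:(\mathbf{g},\bm{\varphi})_{2}=1\}>0$, and since $\mathbf{I}^{(0)}(\bm{\varphi})$ is just the square of this infimum, its positivity is the same condition.

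It then remains to justify replacing the infimum by a minimum, which is where finite-dimensionality does real work. When $\bm{\varphi}\notin\mathcal{R}(\mathbf{Q}^{*})$ I would exhibit a minimizer attaining the value $0$: since $\mathbf{P}$ is injective, $\mathcal{N}(\mathbf{L})=\mathcal{N}(\mathbf{Q})=\mathcal{R}(\mathbf{Q}^{*})^{\perp}$, and because $\bm{\varphi}$ has nonzero component in $\mathcal{R}(\mathbf{Q}^{*})^{\perp}$ one can scale a vector $\mathbf{g}\in\mathcal{N}(\mathbf{L})$ to satisfy $(\mathbf{g},\bm{\varphi})_{2}=1$ while $\|\mathbf{L}\mathbf{g}\|_{2}=0$. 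When $\bm{\varphi}\in\mathcal{R}(\mathbf{Q}^{*})$, the objective $\mathbf{g}\mapsto\|\mathbf{L}\mathbf{g}\|_{2}^{2}$ is a convex quadratic that is bounded below on the affine constraint set, so its infimum is attained there (the preceding remark even gives the closed-form minimizer when $\mathbf{L}\mathbf{L}^{*}$ is invertible). Hence the infimum is a minimum in both cases.

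For Part 2, I would first record that, with $[\mathbf{Q}]_{j,n}=\alpha_{j}\bm{\varphi}(y_{n};z_{j})$, the $j$-th column of $\mathbf{Q}^{*}$ is $\overline{\alpha_{j}}\,\overline{\bm{\varphi}(\cdot;z_{j})}$, so that, provided each $\alpha_{j}\neq0$,
\[ \mathcal{R}(\mathbf{Q}^{*}) = \mathrm{span}\bigl\{\overline{\bm{\varphi}(\cdot;z_{j})}\bigr\}_{j=1}^{J}. \]
The hypothesis that $\{\bm{\psi}(\cdot;z_{j})\}_{j=1}^{J}$ is linearly independent says exactly that the columns of $\mathbf{P}$ are independent, i.e. $\mathbf{P}$ is injective, so Part 1 applies with the test vector $\bm{\varphi}=\overline{\bm{\varphi}(\cdot;z)}$. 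Thus $\mathbf{I}^{(0)}(\overline{\bm{\varphi}(\cdot;z)})>0$ iff $\overline{\bm{\varphi}(\cdot;z)}\in\mathrm{span}\{\overline{\bm{\varphi}(\cdot;z_{j})}\}_{j=1}^{J}$. The forward implication is immediate, since $z=z_{j}$ makes the test vector one of the spanning vectors. For the converse I would use the linear-independence hypothesis on $\{\overline{\bm{\varphi}(\cdot;z_{j})}\}$ in the strong form that distinct parameter values produce linearly independent vectors: if $z\notin\{z_{1},\dots,z_{J}\}$, then $\overline{\bm{\varphi}(\cdot;z)}$ together with the $\overline{\bm{\varphi}(\cdot;z_{j})}$ are independent, so the test vector cannot lie in their span and $\mathbf{I}^{(0)}=0$.

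The step I expect to need the most care is this converse in Part 2, because it hinges on the precise reading of ``linearly independent as functions of $z_{j}$'': what is genuinely required is that the map $z\mapsto\overline{\bm{\varphi}(\cdot;z)}$ sends distinct parameters to linearly independent vectors (so that adjoining the test point $z$ to the $J$ source points preserves independence), not merely that the $J$ fixed source vectors are independent. I would state this reading explicitly. The only other point to flag is the standing assumption $\alpha_{j}\neq0$: if some $\alpha_{j}$ vanished, the corresponding column of $\mathbf{Q}^{*}$ would drop out of the range and the equivalence $z=z_{j}\Leftrightarrow\mathbf{I}^{(0)}>0$ would fail at that $z_{j}$.
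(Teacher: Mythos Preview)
Your proposal is correct and follows essentially the same route as the paper: specialize Theorem~\ref{thm:boundedInfCrit} to finite dimensions (injectivity of $\mathbf{P}$ gives bounded-belowness, the paper via $(\mathbf{P}^*\mathbf{P})^{-1}$ rather than the smallest singular value), and for Part~2 identify $\mathcal{R}(\mathbf{Q}^*)$ with the span of the $\overline{\bm{\varphi}(\cdot;z_j)}$ and invoke the linear-independence hypothesis in exactly the strong form you flag, together with $\alpha_j\neq 0$. The only cosmetic difference is in how the infimum is shown to be a minimum: the paper does it uniformly by parametrizing the constraint as $\mathbf{g}=\hat{\bm{\varphi}}+\bm{\Pi}_\varphi\mathbf{h}$ and reducing to an unconstrained linear least-squares problem, rather than splitting into the two cases $\bm{\varphi}\in\mathcal{R}(\mathbf{Q}^*)$ and $\bm{\varphi}\notin\mathcal{R}(\mathbf{Q}^*)$ as you do.
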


\begin{proof}
1. We may replace the infimum of $\mathcal{I}^{(0)}$ in Theorem \ref{thm:boundedInfCrit} with a minimum here because $\mathbf{I}^{(0)}(\bm{\varphi})$ is a linear least squares problem. Indeed, any $\mathbf{g}$ which satisfies the constraint $(\mathbf{g},\bm{\varphi})_2=1$ can be decomposed into $\mathbf{g}=\hat{\bm{\varphi}}+\mathbf{f}$ where $\mathbf{f}$ is orthogonal to $\bm{\varphi}$. Hence, if $\bm{\Pi}_{\varphi}$ is the matrix which orthogonally projects onto $\text{span}\{\bm{\varphi}\}^\perp$ then $\|\mathbf{L}\mathbf{g}\|^2_2=\|\mathbf{L}\bm{\Pi}_{\varphi}\mathbf{h} + \mathbf{L}\hat{\bm{\varphi}}\|^2_2$. The infimum problem in $\mathcal{I}^{(0)}$ is hence a linear least squares problem for $h$ which always obtains its solution.

The corollary then follows immediately from Theorem \ref{thm:boundedInfCrit} because $\mathbf{P}$ is bounded below: when (finite-dimensional) $\mathbf{P}$ is injective, $\left(\mathbf{P}^*\mathbf{P}\right)^{-1}$ exists and hence, for any $\mathbf{h}\in\mathbb{C}^{J\times 1}$, 
\begin{equation*}
\|\mathbf{h}\|_2 = \|\left(\mathbf{P}^*\mathbf{P}\right)^{-1}\mathbf{P}^*\mathbf{P}\mathbf{h}\|_2 \leq \|\left(\mathbf{P}^*\mathbf{P}\right)^{-1}\mathbf{P}^*\|_F \|\mathbf{P}\mathbf{h}\|_2.
\end{equation*}

2. The linear independence assumption on $\bm{\psi}$ gives that $\mathbf{P}$ is injective. Hence, part 1 of this corollary gives that for any $\bm{\varphi}\in\mathbb{C}^{N\times 1}$, $\bm{\varphi}\in\mathcal{R}(\mathbf{Q}^*)$ if and only if $\mathbf{I}^{(0)}(\bm{\varphi})>0$. If $z=z_j$, then $\overline{\bm{\varphi}(\cdot;z_j)}\in\mathcal{R}(\mathbf{Q}^*)$ because it is proportional to one of the columns of $\mathbf{Q}^*$, which are linearly-independent by assumption. On the other hand, if $\overline{\bm{\varphi}(\cdot,z)}\in\mathcal{R}(\mathbf{Q}^*)$ then $\overline{\bm{\varphi}(y;z)} = \sum_{j=1}^J \overline{\alpha}_j \overline{\bm{\varphi}}(y;z_j)$ for all $y\in\Gamma_y$. By linear independence and the assumption that $\alpha_j\neq 0$ for each $j$, this is impossible unless $z=z_j$. 
\end{proof}
{

We must defer to Section \ref{sec:inverseScattering} to prove an analogous result for the inverse scattering problem of identifying a set $D\subseteq\mathbb{R}^d$ describing the shape and location of hidden scatterers from a measured scattered field. In that section, we will build the necessary theory to prove the hypothesis to the following corollary to Theorem \ref{thm:coerciveInfCrit}.

\begin{corollary}\label{cor:extendedEstimationInf}
Make the same assumptions as in Theorem \ref{thm:coerciveInfCrit}. If there is a function $\varphi(\cdot;z)\in X$ parameterized by $z\in\mathbb{R}^d$ such that 
\begin{equation*}
z\in D \quad \text{ if and only if } \quad \varphi(\cdot;z)\in\mathcal{R}(\mathcal{Q}^*)
\end{equation*}
then 
\begin{equation*}
z\in D \quad \text{ if and only if } \quad \mathcal{I}^{(1)}(z):= \inf_{g\in X}\left\{\left|\left(g,\mathcal{L}g\right)_X\right|^2\,:\,\left(g,\varphi(\cdot;z)\right)_X=1\right\}>0. 
\end{equation*}
\end{corollary}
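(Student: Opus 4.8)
The plan is to recognize that this corollary is essentially Theorem \ref{thm:coerciveInfCrit} composed with the hypothesized range characterization, so that almost no new work is required. First I would observe that the functional $\mathcal{I}^{(1)}(z)$ appearing in the statement is exactly the functional $\mathcal{I}^{(1)}(\varphi)$ of Theorem \ref{thm:coerciveInfCrit} evaluated at the particular test element $\varphi = \varphi(\cdot;z)$; the difference is purely notational, with $z$ now serving as a parameter that indexes a family of admissible test functions. Since the corollary inherits every hypothesis of Theorem \ref{thm:coerciveInfCrit}, in particular the factorization $\mathcal{L}=\mathcal{Q}^*\mathcal{A}\mathcal{Q}$ and the coercivity \eqref{eqn:coerciveAssump} of $\mathcal{A}$ on $\mathcal{R}(\mathcal{Q})$, that theorem applies verbatim to each fixed $\varphi(\cdot;z)$.

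The argument then proceeds by chaining two biconditionals. For each fixed $z\in\mathbb{R}^d$ with $\varphi(\cdot;z)\neq 0$, Theorem \ref{thm:coerciveInfCrit} gives
\[
\varphi(\cdot;z)\in\mathcal{R}(\mathcal{Q}^*) \quad\Longleftrightarrow\quad \mathcal{I}^{(1)}(z) > 0.
\]
Combining this with the hypothesized equivalence $z\in D \iff \varphi(\cdot;z)\in\mathcal{R}(\mathcal{Q}^*)$ immediately yields $z\in D \iff \mathcal{I}^{(1)}(z)>0$, which is the claim. The two directions of the final biconditional are obtained simply by following the chain forward and backward.

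The one point requiring care, and the step I would flag as the potential obstacle, is the non-degeneracy requirement underlying Theorem \ref{thm:coerciveInfCrit}, which is stated only for \emph{non-zero} $\varphi$. I would therefore verify that $\varphi(\cdot;z)\neq 0$ at every parameter $z$ where the corollary is invoked: if $\varphi(\cdot;z)=0$, the constraint $(g,\varphi(\cdot;z))_X=1$ is infeasible, the admissible set is empty, and the infimum defining $\mathcal{I}^{(1)}(z)$ would have to be read as $+\infty>0$, spuriously flagging $z\in D$. In the applications of interest this is ruled out by the explicit closed form of the sampling function $\varphi(\cdot;z)$, so the difficulty never arises in practice.

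Finally, I would stress that, unlike the proof of Corollary \ref{cor:pointEstimationInf}, the genuine content here is concentrated entirely in the hypothesis rather than in the deduction: the substantive claim is the existence of a sampling family $\varphi(\cdot;z)$ whose membership in $\mathcal{R}(\mathcal{Q}^*)$ exactly detects the unknown set $D$. Establishing that range characterization for the acoustic inverse scattering problem is the real mathematical work, and it is precisely what the later development in Section \ref{sec:inverseScattering} is designed to supply; the corollary then merely repackages it into the inf-criterion form needed for the sparse-DSM.
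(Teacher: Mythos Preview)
Your proposal is correct and matches the paper's treatment: the paper does not supply a separate proof of this corollary, regarding it as an immediate consequence of Theorem~\ref{thm:coerciveInfCrit} once the range characterization hypothesis is granted, with the substantive work deferred to Section~\ref{sec:inverseScattering}. Your observation about the non-degeneracy requirement $\varphi(\cdot;z)\neq 0$ is a useful clarification that the paper leaves implicit.
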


While the theoretical justification of the two infimum-criteria requires different loss functions, in practice we treat the optimization problems in the same way. This is because under certain circumstances their discretized forms are equivalent.
\begin{lemma}[Adapted from \cite{BoydVandenberghe2004}]\label{lemma:equiv}
Assume $\bm{\Pi}_\varphi^* \mathbf{L} \bm{\Pi}_\varphi$ is positive semi-definite where $\bm{\Pi}_{\varphi}$ is the projection operator used above. Then the vector $\mathbf{g}\in\mathbb{C}^{M\times 1}$ is an optimal solution to $\mathbf{I}^{(1)}(z)$ if and only if $\bm{\Pi}_\varphi^*\mathbf{L}^*\bm{\Pi}_{\varphi} \mathbf{h} = -\bm{\Pi}_{\varphi}^*\mathbf{L}^*\hat{\bm{\varphi}}(\cdot;z)$.
\end{lemma}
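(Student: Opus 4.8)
The plan is to eliminate the affine constraint, reduce $\mathbf{I}^{(1)}(z)$ to an unconstrained minimization over a free vector $\mathbf{h}$, recognize the reduced objective as a \emph{convex quadratic} thanks to the positive semi-definiteness hypothesis, and then read off the stated linear system as the first-order condition — with convexity upgrading that condition from necessary to also sufficient. First I would reuse the decomposition from the proof of Corollary \ref{cor:pointEstimationInf}: every feasible $\mathbf{g}$, i.e. every $\mathbf{g}$ with $(\mathbf{g},\bm{\varphi}(\cdot;z))_2=1$, can be written $\mathbf{g}=\hat{\bm{\varphi}}(\cdot;z)+\bm{\Pi}_\varphi\mathbf{h}$, where $\bm{\Pi}_\varphi$ projects onto $\mathrm{span}\{\bm{\varphi}\}^\perp$ and $\mathbf{h}$ is unconstrained. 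This turns the constrained problem $\mathbf{I}^{(1)}(z)$ into an unconstrained one in $\mathbf{h}$, with $\bm{\Pi}_\varphi$ playing the role of the nullspace basis of the constraint.

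Next I would extract the structural consequence of the hypothesis. Since $\bm{\Pi}_\varphi^*\mathbf{L}\bm{\Pi}_\varphi$ is positive semi-definite it is in particular Hermitian, hence equal to its own conjugate transpose $\bm{\Pi}_\varphi^*\mathbf{L}^*\bm{\Pi}_\varphi$. In the setting relevant here $\mathbf{L}$ is self-adjoint (it is the discretization of $\mathcal{L}=\mathcal{Q}^*\mathcal{A}\mathcal{Q}$ with self-adjoint $\mathcal{A}$, so the two occurrences of $\mathbf{L}$ and $\mathbf{L}^*$ in the statement coincide), so that $Q(\mathbf{h}):=(\mathbf{g},\mathbf{L}\mathbf{g})_2=\mathbf{g}^*\mathbf{L}\mathbf{g}$ is real. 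Substituting the affine form of $\mathbf{g}$ gives $Q(\mathbf{h})=\hat{\bm{\varphi}}^*\mathbf{L}\hat{\bm{\varphi}}+2\,\mathrm{Re}(\hat{\bm{\varphi}}^*\mathbf{L}\bm{\Pi}_\varphi\mathbf{h})+\mathbf{h}^*\bm{\Pi}_\varphi^*\mathbf{L}\bm{\Pi}_\varphi\mathbf{h}$, a real convex quadratic whose Hessian is precisely the positive semi-definite matrix of the hypothesis. This convexity is the engine of the ``if and only if'': by the equality-constrained optimality criterion adapted from \cite{BoydVandenberghe2004}, a feasible point is globally optimal exactly when the gradient of the reduced objective vanishes along the feasible directions.

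I would then compute that stationarity condition. Differentiating $Q$ with respect to $\overline{\mathbf{h}}$ (Wirtinger calculus), the conjugated cross term contributes $\bm{\Pi}_\varphi^*\mathbf{L}^*\hat{\bm{\varphi}}$ and the quadratic term contributes $\bm{\Pi}_\varphi^*\mathbf{L}\bm{\Pi}_\varphi\mathbf{h}$, so setting the gradient to zero yields $\bm{\Pi}_\varphi^*\mathbf{L}\bm{\Pi}_\varphi\mathbf{h}=-\bm{\Pi}_\varphi^*\mathbf{L}^*\hat{\bm{\varphi}}(\cdot;z)$. Replacing the left-hand compression by its conjugate transpose, which is legitimate because the hypothesis makes it Hermitian, gives exactly $\bm{\Pi}_\varphi^*\mathbf{L}^*\bm{\Pi}_\varphi\mathbf{h}=-\bm{\Pi}_\varphi^*\mathbf{L}^*\hat{\bm{\varphi}}(\cdot;z)$, the claimed normal equation. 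Because the reduced objective is convex, this is simultaneously necessary and sufficient for $\mathbf{g}=\hat{\bm{\varphi}}+\bm{\Pi}_\varphi\mathbf{h}$ to be an optimal solution, closing both directions of the equivalence.

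The main obstacle, and the one deserving the most care, is the modulus-squared loss. Because $\mathbf{I}^{(1)}(z)$ literally minimizes $|(\mathbf{g},\mathbf{L}\mathbf{g})_2|^2$, the raw objective in $\mathbf{h}$ is a quartic that is not convex in general, so minimizing it is not a priori the same as minimizing the convex quadratic $Q$. The delicate points are therefore (i) arguing that the positive semi-definiteness hypothesis, together with self-adjointness of $\mathbf{L}$, makes $Q$ a genuine real convex quadratic and that minimizing $Q^2$ coincides with minimizing $Q$ — this holds in the nondegenerate regime $\mathbf{I}^{(1)}(z)>0$ singled out by Corollary \ref{cor:extendedEstimationInf}, where $Q$ keeps a constant (positive) sign so that squaring introduces no spurious minimizers; and (ii) the bookkeeping of the cross terms that produces the $\mathbf{L}^*$ on the right-hand side rather than a symmetrized combination. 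Everything else is the routine linear algebra of projecting onto $\mathrm{span}\{\bm{\varphi}\}^\perp$ and forming normal equations.
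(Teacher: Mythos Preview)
The paper does not actually prove this lemma; it is stated with the attribution ``Adapted from \cite{BoydVandenberghe2004}'' and no argument is given. Your proposal is precisely the standard equality-constrained convex-quadratic argument that citation points to---parametrize the feasible set as $\hat{\bm{\varphi}}+\bm{\Pi}_\varphi\mathbf{h}$, observe that the reduced objective is convex by the PSD hypothesis, and read off the normal equations as necessary and sufficient---so in that sense your approach \emph{is} the intended one.

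Two places where your write-up is looser than it should be, though arguably no looser than the paper itself. First, you justify that $(\mathbf{g},\mathbf{L}\mathbf{g})$ is real by asserting $\mathcal{A}$ is self-adjoint; but in the paper $\mathcal{A}$ is only assumed coercive in the sense of \eqref{eqn:coerciveAssump}, which does not force self-adjointness, so $\mathbf{L}$ need not be Hermitian and $(\mathbf{g},\mathbf{L}\mathbf{g})$ can be genuinely complex. The hypothesis that $\bm{\Pi}_\varphi^*\mathbf{L}\bm{\Pi}_\varphi$ is PSD makes the \emph{compression} Hermitian, but that alone does not make the full quadratic form real. Second, your reduction from minimizing $|Q|^2$ to minimizing $Q$ appeals to ``the nondegenerate regime $\mathbf{I}^{(1)}(z)>0$,'' but $\mathbf{I}^{(1)}(z)>0$ is a statement about the infimum, not a pointwise lower bound on $Q$, so it does not by itself rule out sign changes of $Q$ and hence spurious minimizers of $|Q|^2$. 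The paper sidesteps both issues by treating the lemma as a practical statement (``under certain circumstances their discretized forms are equivalent''), so your level of rigor matches what is expected, but you should flag these as genuine assumptions rather than derived facts.
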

As such, as long as $\bm{\Pi}_\varphi^* \mathbf{L}\bm{\Pi}_{\varphi}$ is semi-positive definite, both optimization problems can be phrased as least-squares problems. While this semi-positive definite assumption may not always hold (for reasons related to noise or location of measurements, for example), this does not pose a problem in results we show in Section \ref{sec:numerics}.

\section{The sparse-direct sampling method}\label{sec:sparseDSM}
The inf-criteria was one of the earliest qualitative methods developed in scattering theory. Nonetheless, it has received less study than other similar techniques such as the LSM and factorization method. Part of the reason for this is the comparitively-slow computational speed required to solve the optimization problem in the inf-criterion. More recent work \cite{BorceaMeng2019,Liu2017} has used the inf-criterion and related factorization method to develop theoretical guarantees about the performance of DSMs which are fast and stable with respect to measurement quality. In a similar manner, we show in Section \ref{sec:beamforming} that beamforming and DSM algorithms can be seen as (very) constrained solutions of the optimization problems in infimum-criteria. Using this idea, we develop the sparse-DSM, which is a generalization of both the inf-criterion and the DSMs discussed here. The idea behind the sparse-DSM is to again constrain solutions of the infimum-criteria optimization problems - but to adapt the constraint to the measurement data. Intuitively, sparse-DSM is parameterized so that at the most constrained, it reduces to DSM or beamforming and at the least constrained it reduces to a full infimum-criterion.

\begin{remark}It is somewhat inconvenient to refer separately to the optimization problems posed in Theorems \ref{thm:boundedInfCrit} and \ref{thm:coerciveInfCrit}. As such, throughout the rest of the article we use the notation 
\begin{equation*}
\mathcal{K}^{(i)}g :=\begin{cases} \|\mathcal{L}g\|_X & i=0 \\ \left|\left(\mathcal{L}^*g,g\right)_X\right|^2 & i=1 \end{cases}
\end{equation*}
to simplify the simultaneous discussion. The matrix $\mathbf{K}^{(i)}$ denotes the equivalent finite-dimensional operator.\end{remark} 

\subsection{Beamforming and Direct Sampling Methods}\label{sec:beamforming}
Solutions to the optimization problems in Corollaries \ref{cor:pointEstimationInf} and \ref{cor:extendedEstimationInf} can be written in the form $\hat{\varphi} + \Pi_\varphi h$
where $\Pi_\varphi$ projects onto $\text{span}\{\varphi\}^{\perp}$ and $h$ is some element in its domain. By taking $h\equiv 0$, it follows immediately from Corollaries \ref{cor:pointEstimationInf} and \ref{cor:extendedEstimationInf} that if $\varphi\in\mathcal{R}(\mathcal{Q}^*)$ then 
\begin{equation*}
\mathcal{I}_{\emptyset}^{(i)}(\varphi):=\mathcal{K}^{(i)}\hat{\varphi}>0, \quad i=0,1.
\end{equation*}
Relating this to the unknowns of each inverse problem, Corollary \ref{cor:pointEstimationInf} yields that if $z=z_j$ then $\mathcal{I}_{\emptyset}^{(0)}(z)>0$. In the same way, Corollary \ref{cor:extendedEstimationInf} gives that if $z\in D$ then $\mathcal{I}_{\emptyset}^{(1)}(z)>0$. This is half of the principle behind the DSM and beamforming
described here and elsewhere\footnote{Early ideas for DSMs in inverse scattering \cite{Ito2012, Ito2013, Potthast2010} primarily considered the case of one incident direction. The DSMs introduced in those articles do not include a norm or inner-product because there is only one point over which to take those operations. Further studies \cite{LeemLiuPelekanos2018,Liu2017} have studied DSMs with more data and derived similar indicator functions as we do here.}: evaluating $\mathcal{I}_{\emptyset}^{(i)}(z)$ on a grid $z\in\mathcal{Z}\subseteq\mathbb{R}^d$ containing the true unknowns will positively identify all unknowns-of-interest. However, unlike in the infimum-criteria described above, we cannot say anything about false positives: indeed, as we will simulate below, DSMs often blur together nearby unknowns to make them look like a single unknown with no separation. Such a false positive is a hallmark of the sometimes low-resolution reconstructions produced by the class of DSMs we consider in the paper.

The second half of the justification of DSMs and beamforming algorithms is a demonstration that $\mathcal{I}_{\emptyset}^{(i)}(z)$ has a local maximum only at the unknowns-of-interest, $z=z_j$ or $z\in D$. We will show such behavior for the AOA problem and inverse scattering problem in the sections below. Unfortunately, we also show that if high-resolution reconstructions are required (for example, if two point unknowns are very close together) then we may not be able to distinguish them. Indeed, beamforming and DSM algorithms have significant positive and negative properties. On one hand, they have incomplete theoretical justification and low-resolution reconstructions. On the other hand, evaluating $\mathcal{I}_{\emptyset}^{(i)}(z)$ is significantly faster than solving an optimization problem. Moreover, numerical evidence suggests that for practical problems, there are easier-to-achieve measurement and noise requirements for these techniques than, e.g., infimum-criteria \cite{Ito2012, Ito2013}. The sparse-DSM is one way to balance these positive and negative aspects. 

\subsection{Derivation and properties of the sparse-DSM}
We call the technique introduced in the article the \textit{sparse}-DSM because in practice, the parameterization is accomplished by solving a \textit{sparse}-constrained least-squares problem described below. The sparse least squares problem is related to unknowns through a corollary to Theorems \ref{thm:boundedInfCrit} and \ref{thm:coerciveInfCrit}.

\begin{corollary}\label{cor:pointEstimationSparseInf}
Make the same assumptions as Theorem \ref{thm:boundedInfCrit} (or Theorem \ref{thm:coerciveInfCrit}). Then, for $i=0$ (or $i=1$) and $\varphi\in X$,
\begin{equation*}
\varphi\in\mathcal{R}(\mathcal{Q}^*) \text{ if and only if } \mathcal{I}_\chi^{(i)}(\varphi):=\inf_{h\in\chi}\left\{\mathcal{K}^{(i)}( \hat{\varphi} + h)\right\}>0  
\end{equation*}
 for all $\chi\subseteq \mathcal{P}_\varphi:=\{h\in X \, | \, (h,\varphi)_X = 0\}$.
\end{corollary}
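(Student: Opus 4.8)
The plan is to derive the corollary directly from Theorems \ref{thm:boundedInfCrit} and \ref{thm:coerciveInfCrit} by viewing $\mathcal{I}_\chi^{(i)}(\varphi)$ as nothing more than the infimum criterion of those theorems with its feasible set shrunk from all of $\mathcal{P}_\varphi$ down to $\chi$, and then exploiting monotonicity of an infimum under restriction of its index set. The one structural fact I would establish first is that the largest admissible choice $\chi=\mathcal{P}_\varphi$ reproduces the original criterion exactly. The feasible set $\{g\in X:(g,\varphi)_X=1\}$ appearing in Theorems \ref{thm:boundedInfCrit} and \ref{thm:coerciveInfCrit} is an affine translate of $\mathcal{P}_\varphi$, so parameterizing it as $\hat{\varphi}+h$ with $h$ ranging over $\mathcal{P}_\varphi$ yields $\mathcal{I}_{\mathcal{P}_\varphi}^{(i)}(\varphi)=\mathcal{I}^{(i)}(\varphi)$ up to a strictly positive multiplicative constant coming from the base-point normalization; since $\mathcal{K}^{(i)}$ is positively homogeneous, this constant never affects whether the value is zero or positive.

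For the forward implication I would assume $\varphi\in\mathcal{R}(\mathcal{Q}^*)$ and apply Theorem \ref{thm:boundedInfCrit} (respectively Theorem \ref{thm:coerciveInfCrit}) to conclude $\mathcal{I}^{(i)}(\varphi)>0$. Because any $\chi\subseteq\mathcal{P}_\varphi$ is a subset of the full perturbation space, the infimum over $\chi$ dominates the infimum over $\mathcal{P}_\varphi$, giving $\mathcal{I}_\chi^{(i)}(\varphi)\geq\mathcal{I}_{\mathcal{P}_\varphi}^{(i)}(\varphi)=\mathcal{I}^{(i)}(\varphi)>0$, with the usual convention that the infimum over $\chi=\emptyset$ is $+\infty$. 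Hence $\mathcal{I}_\chi^{(i)}(\varphi)>0$ for every $\chi\subseteq\mathcal{P}_\varphi$, as required. If one prefers a self-contained estimate, the proof of Theorem \ref{thm:boundedInfCrit} in fact supplies a single $\delta>0$ with $\mathcal{K}^{(i)}(\hat{\varphi}+h)\geq\delta$ for \emph{all} $h\in\mathcal{P}_\varphi$ simultaneously (e.g. $\delta$ proportional to $c/\|\phi\|_Z$ when $\mathcal{Q}^*\phi=\varphi$), which makes the uniformity over $\chi$ transparent.

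For the converse I would simply specialize the hypothesis ``$\mathcal{I}_\chi^{(i)}(\varphi)>0$ for all $\chi\subseteq\mathcal{P}_\varphi$'' to the single choice $\chi=\mathcal{P}_\varphi$. By the identification of the first step this reads $\mathcal{I}^{(i)}(\varphi)>0$, and Theorem \ref{thm:boundedInfCrit} (respectively Theorem \ref{thm:coerciveInfCrit}) then returns $\varphi\in\mathcal{R}(\mathcal{Q}^*)$.

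The argument is short precisely because all of the analytic content — the density argument for $i=0$ and the coercivity estimate for $i=1$ — is inherited wholesale from the two theorems; the ``for all $\chi$'' quantifier does the rest, supplying the extreme case that recovers the full criterion. The only delicate point, and the one I would treat most carefully, is the bookkeeping in the first step: checking that replacing the constraint $(g,\varphi)_X=1$ by the base point $\hat{\varphi}$ together with perturbations in $\mathcal{P}_\varphi$ rescales $\mathcal{K}^{(i)}$ only by a positive scalar, so that the ``$>0$ versus $=0$'' dichotomy is preserved. I would also remark, as in the proof of Corollary \ref{cor:pointEstimationInf}, that in the finite-dimensional realization these infima are attained least-squares minima, so $\inf$ may be read as $\min$ without altering the statement.
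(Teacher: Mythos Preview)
Your proposal is correct and follows essentially the same approach as the paper: both arguments identify $\mathcal{I}_{\mathcal{P}_\varphi}^{(i)}$ with the original infimum criterion, invoke monotonicity $\mathcal{I}_\chi^{(i)}(\varphi)\geq\mathcal{I}_{\mathcal{P}_\varphi}^{(i)}(\varphi)$ for the forward direction, and specialize to $\chi=\mathcal{P}_\varphi$ for the converse. Your version is in fact more careful than the paper's in tracking the normalization discrepancy between the constraint $(g,\varphi)_X=1$ and the base point $\hat{\varphi}$, a point the paper glosses over.
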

\begin{proof}
We only consider the adaptation from Theorem \ref{thm:boundedInfCrit}; the proof is identical when adapting Theorem \ref{thm:coerciveInfCrit}.

It is immediate from Theorem \ref{thm:boundedInfCrit} that $\mathcal{I}_{\chi}^{(i)}(\varphi)>0$ for all $\chi\subseteq\mathcal{P}_\varphi$ implies $\varphi\in\mathcal{R}(\mathcal{Q}^*)$ since it includes the case $\chi=\mathcal{P}_{\varphi}$. On the other hand, if $\varphi\in\mathcal{R}(\mathcal{Q}^*)$ then $\mathcal{I}^{(i)}_{\mathcal{P}_\varphi}(\varphi)>0$. Since $\mathcal{I}^{(i)}_{\chi}(\varphi)\geq \mathcal{I}^{(i)}_{\mathcal{P}_\varphi}(\varphi)$ for any $\chi\subseteq \mathcal{P}_\varphi$, we see that $\varphi\in\mathcal{R}(\mathcal{Q}^*)$ implies $\mathcal{I}^{(i)}_{\chi}(\varphi)>0$ for all $\chi\subseteq\mathcal{P}_\varphi$. 
\end{proof}

Continuing to use the same language as Corollary \ref{cor:pointEstimationSparseInf}, \textit{the sparse-DSM entails fixing a strictly-smaller subset $\hat{\chi}\subset \mathcal{P}_\varphi$ and calculating $\mathcal{I}_{\hat{\chi}}^{(i)}(\varphi)$}. Ultimately, we relate the unknown-of-interest (e.g., $\{z_j\}_{j=1}^J$ or $D$) to the sign of $\mathcal{I}_{\hat{\chi}}^{(i)}(z)$ evaluated on a grid $\mathcal{Z}\subset\mathbb{R}^{d}$ containing the unknowns. It may not be immediately obvious why this more complicated optimization procedure with fixed $\hat{\chi}\subset\mathcal{P}_\varphi$ offers any improvement over, for example, a standard DSM. Indeed, Corollary \ref{cor:pointEstimationSparseInf} reveals that $\varphi\in\mathcal{R}(\mathcal{Q}^*)$ implies $\mathcal{I}_{\hat{\chi}}^{(i)}(\varphi)>0$ but that $\mathcal{I}_{\hat{\chi}}^{(i)}(\varphi)>0$ does not necessarily imply anything about $\mathcal{R}(\mathcal{Q}^*)$. In other words, the sparse-DSM is guaranteed to reveal the location of all unknowns, but has the potential for false positives -- the same problem as standard DSM and beamforming. 

The potential for improvement of sparse-DSM over other techniques becomes more clear by defining explicitly a false positive from a standard DSM algorithm. Indeed, noticing that the standard DSM corresponds to the ``optimization problem" $\mathcal{I}_{\hat{\chi}}^{(i)}(\varphi)$ with $\hat{\chi}=\emptyset$, a false positive occurs when $\mathcal{K}^{(i)}\hat{\varphi}>0$ but $\inf_{h\in\mathcal{P}_{\varphi}}\left\{\mathcal{K}^{(i)}( \hat{\varphi} + h)\right\}=0$.
As the difference in area between $\emptyset$ and $\mathcal{P}_\varphi$ is substantial, there are many points in the sets at which this could occur. Intuitively-speaking, increasing the area of $\hat{\chi}$ ($\emptyset\subset\hat{\chi}\subset\mathcal{P}_\varphi$) reduces the chance that $\mathcal{I}_{\hat{\chi}}^{(i)}(\varphi)>0$ but $\mathcal{I}^{(i)}(\varphi)=0$ simply because the size difference between $\hat{\chi}$ and $\mathcal{P}_\varphi$ is smaller. This is why we expect the sparse-DSM to improve over standard DSM. Indeed, while these observations are not precise, the results shown in Section \ref{sec:numerics} validate them on the problems we consider here.

\subsection{Implementation}
In practice, we implement the sparse-DSM by solving the sparse-constrained minimization problems
\begin{equation}\label{eqn:sparseDSM}
\mathbf{I}^{(i)}_k(z) := \min_{\mathbf{h}\in\mathbb{C}^{N\times 1}} \left\{\mathbf{K}^{(i)}(\hat{\bm{\varphi}}+\bm{\Pi}_{\varphi}\mathbf{h}) \, : \, \|\mathbf{h}\|_0\leq k \right\}, \quad (i=0,1), \quad z\in\mathcal{Z}
\end{equation}
where $\bm{\Pi}_\varphi$ is again the projection matrix onto span$\{\varphi\}^{\perp}$, $\mathcal{Z}$ is a set containing the true unknowns, and $\|\cdot\|_0$ is the semi-norm which indicates the number of non-zero entries in a vector. We call $I_k^{(i)}(z)$ the $k$-DSM. The product $\bm{\Pi}_\varphi \mathbf{h}$ above can be thought of as mapping to specific subsets of $\mathcal{P}_\varphi$. Hence, the minimization problem in \eqref{eqn:sparseDSM} is a version of
the minimization problem in Corollary \ref{cor:pointEstimationSparseInf}; increasing the value of $k$ leads to a larger $\chi\subset\mathcal{P}_\varphi$ over which the minimization takes place. As in the discussion above about Corollary \ref{cor:pointEstimationSparseInf}, $k=0$ reduces to a standard DSM and increasing $k$ reduces the chance of false positives while the technique becomes an inf-criterion. However, too-small of a $k$ still can lead to false positives. While we do not provide a method to automate the choice of $k$ in this article, we remark $k$ is independent of the unknowns themselves.

In addition to the sparse constrained indicator functions $\mathbf{I}^{(i)}_k(z)$ we will consider a mismatch constrained problem which we call the error-DSM
\begin{equation}\label{eqn:errorDSM}
\mathbf{I}^{(i)}_\delta(z) := \min_{\mathbf{h}\in\mathbb{C}^{N\times 1}} \{ \|\mathbf{h}\|_0\,:\, \mathbf{K}^{(i)}(\bm{\Pi}_\varphi \mathbf{h} + \hat{\bm{\varphi}}(\cdot;z)) \leq \delta\}\quad(i=0,1). 
\end{equation}
In principle, $\delta$ can be chosen as a function of $z$, though we do not take full advantage of such flexibility here. 

There has been a large amount of work in the past 20 years on efficient solutions to sparse estimation problems of the form \eqref{eqn:sparseDSM} and \eqref{eqn:errorDSM}. Since a benefit of qualitative methods is their speed, we require a fast solution technique. Indeed, we must solve an equation of this form repeatedly for different values of $z\in\mathcal{Z}$. In the results given in Section \ref{sec:numerics}, we use a fast solution technique based on the batch orthogonal matching pursuit (batch-OMP) method of \cite{RubinsteinEtAl2008}, which was developed to quickly solve problems of the form \eqref{eqn:sparseDSM} and \eqref{eqn:errorDSM}. Batch-OMP takes a greedy approach to solving sparse least squares problems and reduces computational overhead by computing certain matrix-matrix and matrix-vector products only once. Nonetheless, any reliable sparse optimization routine should produce similar results to what we show here. We refer the reader to \cite{RubinsteinEtAl2008} for full details. 

\section{The Direction-of-Arrival Problem}\label{sec:AOA}
In the AOA problem, $J$ signals propagating through space are measured by an array of $M$ receivers. The signals are a result of 
some transmitting antennas outputting a wave which propagates to the receivers. We assume that the signals which impinge on the array of receivers are in 
the far field of the array and are narrowband with wavenumber $k=\frac{2\pi}{\lambda}$ where $\lambda$ is the wavelength of the incident wave. Assume also that the transmitters and receivers in this problem are isotropic, which allows us to ignore antenna specifics. This simplified model is typically used in AOA estimation research, though some modifications may be required in practice. 

Denote the location of the $m$th antenna in the receiving array by $x_m\in\mathbb{R}^d$ ($d>0$) and the time at which the signal is recorded by $t_\ell$, $\ell=1,\ldots, L$. Assume each incoming wave, indexed by $j=1,\ldots,J$ impinges on this array from a direction on the surface of a unit ball, $z_j\in\mathbb{S}^2$. In the problems we consider here, the array is a 2-dimensional rectangle with $M$ antennae which are uniformly distributed in each direction, $x_m = (m_x\Delta x, m_y\Delta y,0)$ where $m_x=1,\ldots,M_x$ and $m_y=1,\ldots,M_y$ are indices such that $M=M_xM_y$ and $\Delta_x$ and $\Delta_y$ are the uniform distance between successive array elements in the $x$ and $y$ dimensions, respectively. For simplicity we center the z-coordinate at $0$. To simplify notation, we convert to spherical coordinates $(x,y,z)=(\rho \sin{(\theta)}\cos{(\phi)}, \rho \sin{(\theta)}\sin{(\phi)}, \rho \cos{(\theta)})$
 for $\rho\geq 0$,  $\theta\in [0,\pi]$, and $\phi\in [0,2\pi)$. The unknown AOAs $z_j\in\mathbb{S}^2$ can be parameterized by $(\theta_j,\phi_j)$, and for notational ease we introduce the notation $u_j = \sin(\theta_j)\cos(\phi_j)$ and $v_j = \sin(\theta_j)\sin(\phi_j)$ and  $\varphi((x,y);(u,v)) := \exp{(-ik\left(xu+yv\right))}$. 

Under these assumptions, the AOA signal model is  \cite{VanTrees2002}
\begin{equation*}
f(x_m,t_\ell) = \sum_{j=1}^J s_j(t_\ell)\varphi((m_x\Delta x,m_y \Delta y),(u_j,v_j)) \quad m=1,\ldots,M
\end{equation*}
where $s_j(t_\ell)$ is the time-domain component of the measured signal at time $t=t_\ell, \ell=1,\ldots,L$.
Hence, the AOA estimation problem is to: \textit{Estimate $(u_j,v_j)\in [-1,1]^2$ from the measurements $f(x_m,t_\ell)$}. 

Below we apply sparse-DSM to data collected in the data matrix $\mathbf{L}\in\mathbb{C}^{L\times M}$ defined by $[\mathbf{L}]_{\ell,(m_x,m_y)}=f((x_{m_x},x_{m_y}),t_{\ell})$. This is a discretization of the operator $(\mathcal{L}g)(t) = \int_{\Gamma_x\times \Gamma_y} f(x,t)g(x)\dx{x}$. The data matrix can be factored as $\mathbf{L}=\mathbf{P}\mathbf{Q}$ with $[\mathbf{P}]_{\ell,j} = s_j(t_\ell)$ and $[\mathbf{Q}]_{j,(m_x,m_y)} = \varphi((m_x\Delta_x,m_y\Delta_y);(u_j,v_j))$. Corollary \ref{cor:pointEstimationInf} provides a technique to estimate $(u_j,v_j)$ from $\mathbf{L}$. 
\begin{corollary}\label{cor:AOAInf}
Assume the $J$ vectors $\{s_j(t_\ell), \ell=1,\ldots,L\}_{j=1}^J$ are linearly independent. Then
\begin{equation*}
(u,v)=(u_j,v_j) \quad \text{ if and only if } \quad \min_{\mathbf{g}\in\mathbb{R}^{M\times 1}} \left\{\|\mathbf{L}\mathbf{g}\|_2^2 \, : \, (\mathbf{g},\overline{\bm{\varphi}((\cdot,\cdot);(u,v))})_2=1\right\}>0. 
\end{equation*}
\end{corollary}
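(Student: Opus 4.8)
The plan is to recognize the claim as a direct specialization of part 2 of Corollary~\ref{cor:pointEstimationInf}, so that the bulk of the work is bookkeeping plus the verification of one nontrivial hypothesis. First I would fix the dictionary between the AOA data model and the abstract model \eqref{eqn:sparseSum}: the time samples $t_\ell$ ($\ell=1,\dots,L$) play the role of the variable $x$ and index the rows of $\mathbf{L}$, the array positions $(m_x\Delta_x,m_y\Delta_y)$ play the role of $y$ and index the $M=M_xM_y$ columns, the unknowns are $z_j=(u_j,v_j)$, the known envelope is $\psi(t_\ell;z_j)=s_j(t_\ell)$, the known steering function is $\varphi$, and the amplitudes are $\alpha_j\equiv 1$. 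With these identifications the factorization $\mathbf{L}=\mathbf{P}\mathbf{Q}$ with $[\mathbf{P}]_{\ell,j}=s_j(t_\ell)$ and $[\mathbf{Q}]_{j,(m_x,m_y)}=\varphi((m_x\Delta_x,m_y\Delta_y);(u_j,v_j))$ is exactly the one demanded by the corollary, and its conclusion $z=z_j\iff \mathbf{I}^{(0)}(\overline{\varphi(\cdot;z)})>0$ is verbatim the stated equivalence once we set $\bm{\varphi}=\overline{\varphi((\cdot,\cdot);(u,v))}$.

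To apply part 2 of Corollary~\ref{cor:pointEstimationInf} I must check its two linear-independence hypotheses. The first, that $\{s_j(t_\ell),\ \ell=1,\dots,L\}_{j=1}^J$ is linearly independent, is precisely the standing assumption, and it is what makes $\mathbf{P}$ injective (its columns are exactly these vectors). The second hypothesis, that the conjugated steering vectors $\{\overline{\varphi((\cdot,\cdot);(u_j,v_j))}\}_{j=1}^J$ are linearly independent over the array grid, is the one piece that requires real work, and I expect it to be the main obstacle.

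For that step I would exploit the separable structure of the array manifold. Writing $a_j=e^{ik\Delta_x u_j}$ and $b_j=e^{ik\Delta_y v_j}$, the conjugated steering vector has entries $\overline{\varphi((m_x\Delta_x,m_y\Delta_y);(u_j,v_j))}=a_j^{m_x}b_j^{m_y}$, a tensor product of two one-dimensional geometric (Vandermonde) sequences. Suppose $\sum_j c_j a_j^{m_x}b_j^{m_y}=0$ on the whole grid. Fixing $m_x$ and grouping the indices $j$ by their distinct $b$-values, a Vandermonde argument in the $m_y$-direction (valid once $M_y$ is at least the number of distinct $b$-values) forces, for each fixed $b$-value, the vanishing of $\sum_j c_j a_j^{m_x}$ over that group for every $m_x$; a second Vandermonde argument in the $m_x$-direction (valid once $M_x$ is at least the size of each group) then forces every $c_j=0$. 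This argument needs two standard ingredients of the AOA setup that I would make explicit: that distinct directions $(u_j,v_j)\in[-1,1]^2$ map to distinct torus points $(a_j,b_j)$, which holds under the usual no-aliasing (e.g.\ half-wavelength, $\Delta_x,\Delta_y\le\lambda/2$) spacing, and that the array is rich enough, for which $M_x,M_y\ge J$ is amply sufficient.

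With both linear-independence hypotheses verified, the equivalence follows immediately from part 2 of Corollary~\ref{cor:pointEstimationInf}: the forward direction holds because $\overline{\varphi(\cdot;z_j)}$ is proportional to a column of $\mathbf{Q}^*$ and hence lies in $\mathcal{R}(\mathbf{Q}^*)$, while the converse uses linear independence together with $\alpha_j\neq 0$ to rule out any $z\notin\{z_j\}$, both of which are already packaged inside the cited corollary. The only genuinely new content beyond the citation is the tensor-Vandermonde verification of the steering-vector independence, which is why I flag it as the crux.
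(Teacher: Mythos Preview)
Your proposal is correct and follows essentially the same route as the paper: both reduce the claim to part~2 of Corollary~\ref{cor:pointEstimationInf} via the factorization $\mathbf{L}=\mathbf{P}\mathbf{Q}$ with $[\mathbf{P}]_{\ell,j}=s_j(t_\ell)$ and $[\mathbf{Q}]_{j,(m_x,m_y)}=\varphi((m_x\Delta_x,m_y\Delta_y);(u_j,v_j))$. The paper's proof is a single sentence that simply asserts the steering vectors $\varphi(m_x\Delta_x,m_y\Delta_y;u,v)$ are linearly independent for different $(u,v)$ and invokes the corollary; you go further by actually supplying a tensor-Vandermonde argument for that independence and, in doing so, surface the implicit side conditions (no aliasing in the map $(u,v)\mapsto(e^{ik\Delta_x u},e^{ik\Delta_y v})$, and array dimensions large enough relative to $J$) that the paper leaves unstated. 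So your ``crux'' is not extra work relative to the paper's argument but rather a justification of a hypothesis the paper takes for granted.
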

\begin{proof}
This follows immediately from Corollary \ref{cor:pointEstimationInf} because $\varphi(m_x\Delta x,m_y\Delta y;u,v)$ are linearly independent for different $(u,v)$ values. 
\end{proof}

\begin{remark}\label{remark:LI}
When $s_j(t)$ are not linearly-independent for different $j$ - for instance in the common situation when a reflection causes the same time-domain signal to impinge on the array from multiple directions - the estimation techniques discussed here lose their theoretical backing. This is commonly called the correlated signal problem in the AOA literature and there are many methods for overcoming it \cite{PillaiKwon1989, ShanWaxKailath1985, ZiskindWax1988}. 
\end{remark}

\subsection{Beamforming}
A beamformer can be constructed for this problem by following the discussion in Section \ref{sec:beamforming} to Corollary \ref{cor:AOAInf}. Indeed, this discussion reveals that the indicator 
\begin{equation*}
\mathbf{I}_0^{(0)}((u,v)):=\|\mathbf{L}\overline{\varphi((\cdot,\cdot);(u,v))}\|_2^2
\end{equation*}
is positive at $(u,v)=(u_j,v_j)$, $j=1,\ldots,J$. More work is required to determine the behavior of $\mathbf{I}_0^{(0)}((u,v))$ away from $(u_j,v_j)$. To this end, note that
\begin{equation*}
[\mathbf{L}\overline{\varphi((\cdot,\cdot);(u,v))}]_\ell = \sum_{j=1}^J s_j(t_\ell) D_{M_x+1}(k\Delta x (u_j - u)) D_{M_y+1}(k\Delta y (v_j - v))
\end{equation*}
where $D_N$ is the Dirichlet kernel 
\begin{equation*}
D_N(x) := \sum_{k=0}^{N-1} \exp{(ikx)} = \exp{(i(N-1)x/2)}\frac{\sin{(Nx/2)}}{\sin{(x/2)}}.
\end{equation*}
In the case that $J=1$, a straightforward but lengthy calculation demonstrates that $\mathbf{I}_0^{(0)}(u,v)$ has a local maximum at $(u,v)=(u_1,v_1)$. For $J>1$ the calculations become more difficult so we show the expected behavior of beamforming with simulation. Figure \ref{fig:DirichletKernel} shows the combination of Dirichlet kernels $\left|D_N(x)+D_N(x-\alpha)\right|$ for $N=50$, $\alpha=\{1/16,1/8,1/4,1/2\}$ over $x\in[-\pi,\pi]$. Here, $\alpha$ is analogous to the angle of one incomming wave impinging on an array of receivers and as the figure demonstrates, the Dirichlet kernel peaks at $x=0$ and $x=\alpha$.  Indeed, up to scaling, this figure mimics beamforming 1-dimensional AOAs located at $x=0$ and $x=\alpha$. The figure also demonstrates that the resolution of beamforming is not expected to be too high - when $\alpha=1/16$, there is no obvious separation between the peaks. Indeed, this is a manifestation of the false positive concern about beamforming we discuss above: when $\mathbf{I}_0^{(0)}(u,v)$ is positive for $(u,v)$ which are not true AOAs, we are not able to visually distinguish between true AOAs close to each other. We refer to the extensive research on beamforming in the communications literature (\cite{VanVeenBuckley1988} and references therein) for more details and a deeper discussion of this technique, including ways to improve the resolution of reconstructions.

\begin{figure}[tbhp]
\centering
\subfloat[$\alpha = 1/2$]{\includegraphics[height=1in]{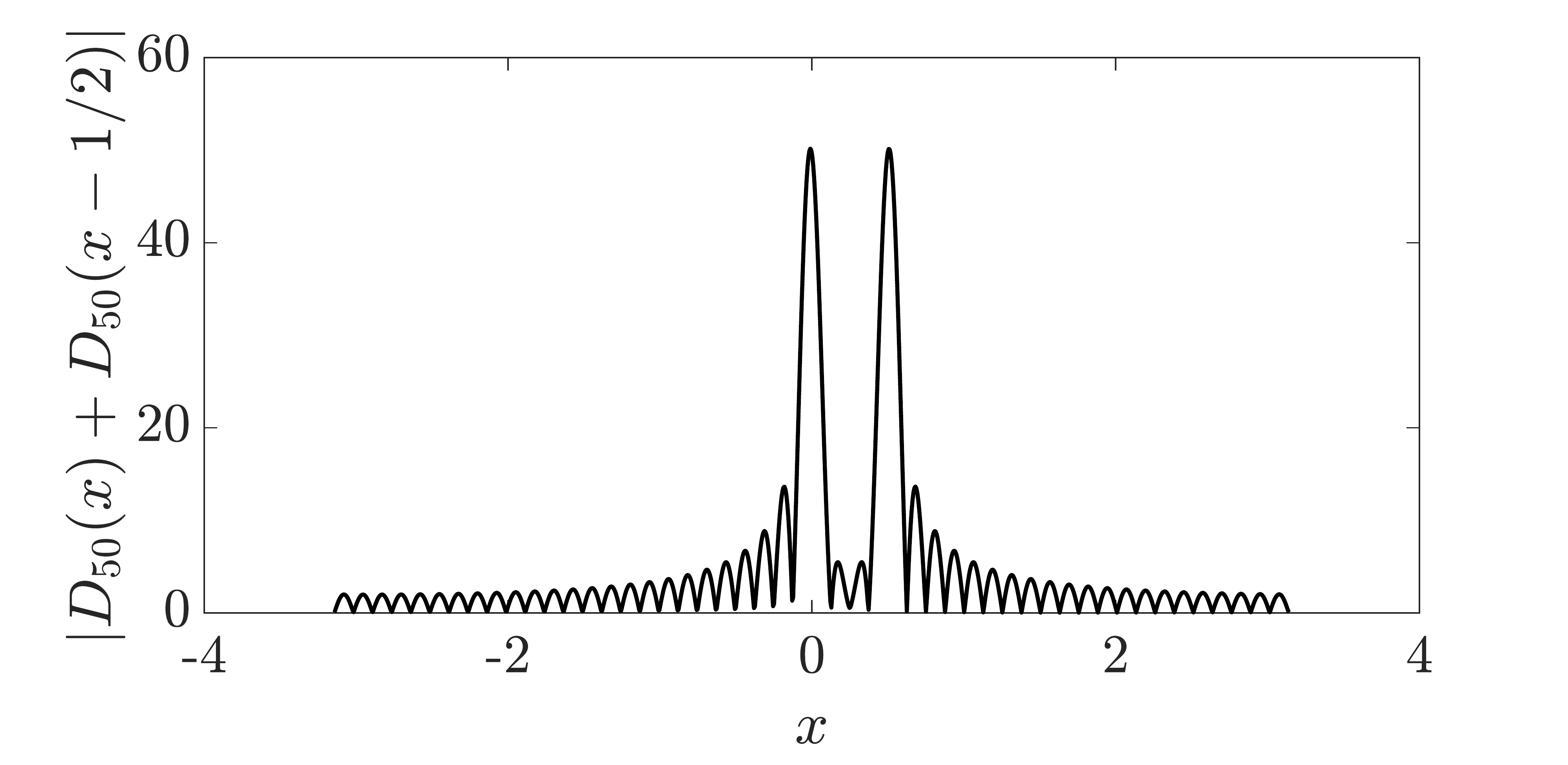}}
\subfloat[$\alpha = 1/4$]{\includegraphics[height=1in]{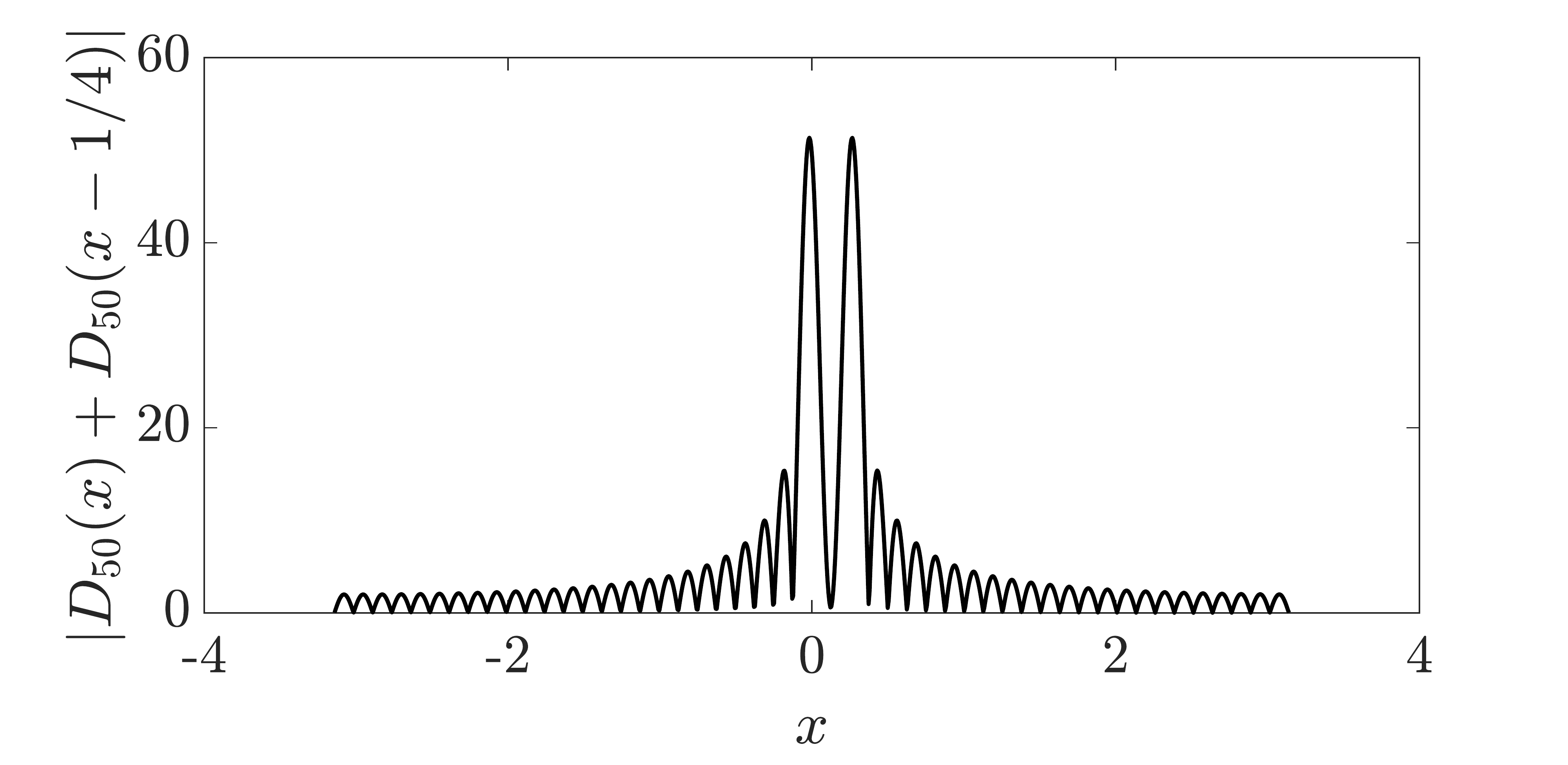}}\\
\subfloat[$\alpha = 1/8$]{\includegraphics[height=1in]{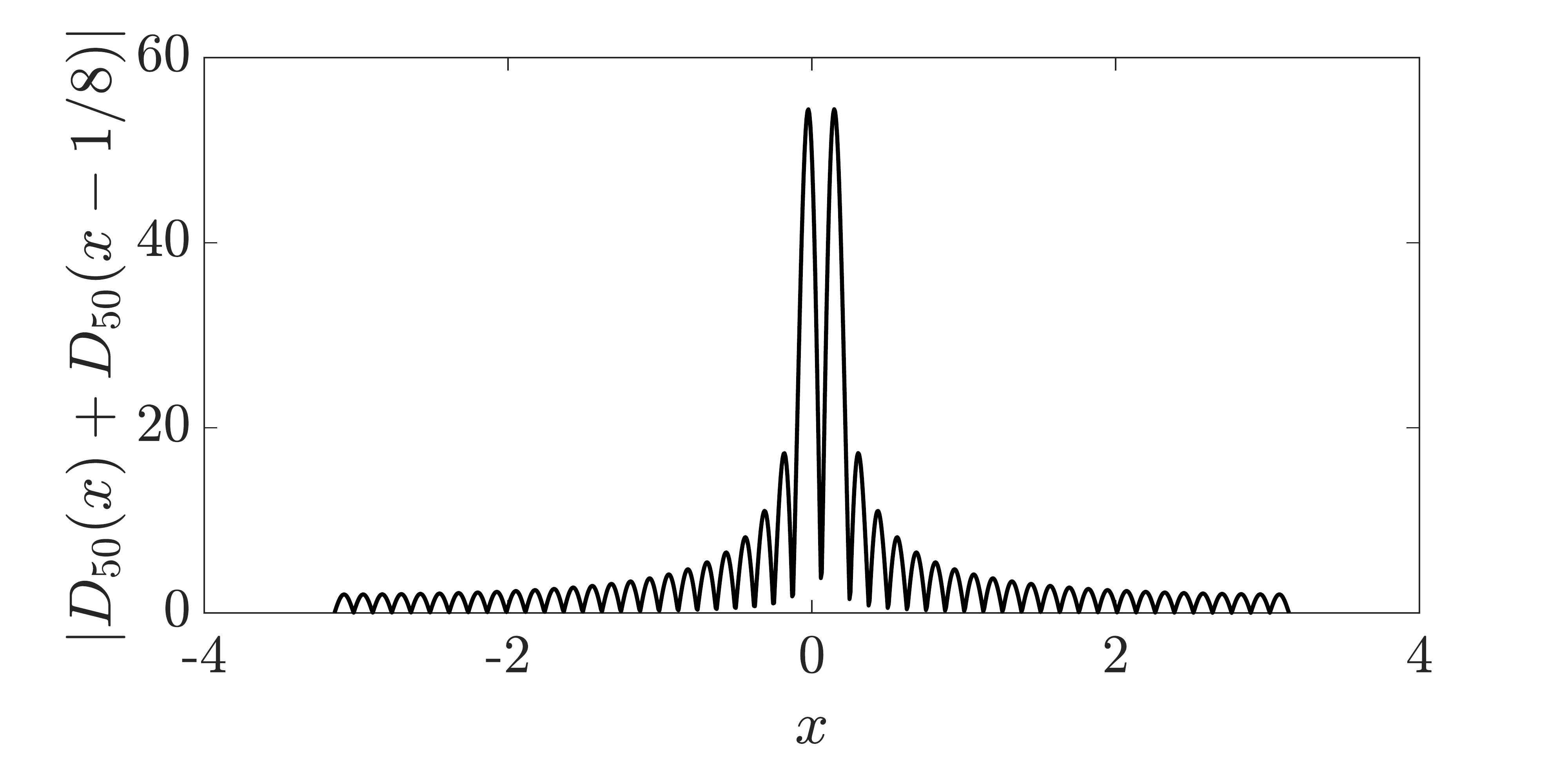}}
\subfloat[$\alpha = 1/16$]{\includegraphics[height=1in]{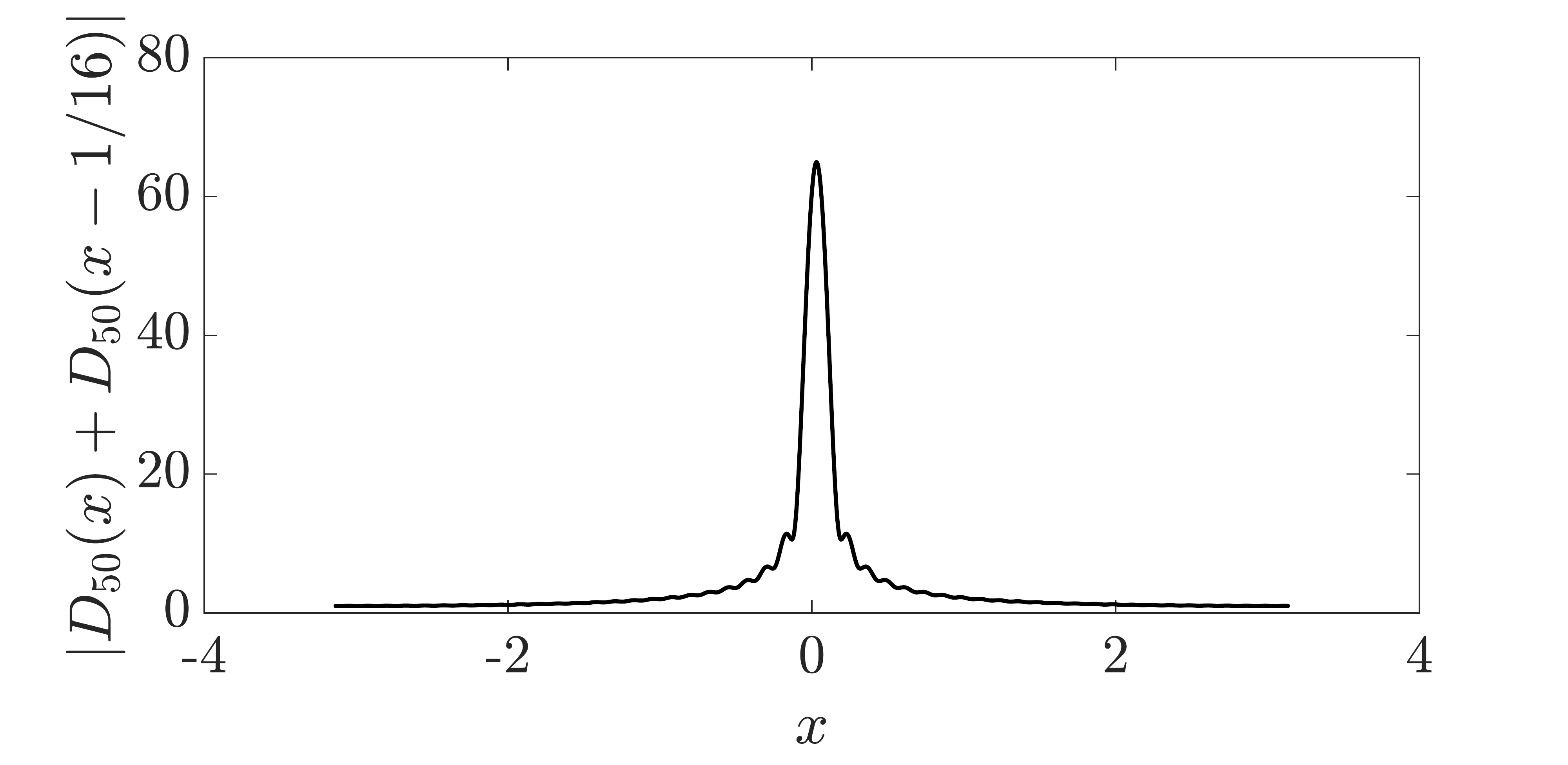}}
\caption{Plots of $|D_{50}(x)+D_{50}(x-\alpha)|$ for differing $\alpha$ in decreasing order from top-left to bottom-right.}\label{fig:DirichletKernel}
\end{figure}

\section{Sparse-DSM for the inverse medium problem}\label{sec:inverseScattering}
We continue now with our second application, inverse scattering of penetrable media from acoustic waves. Indeed, following for example \cite{CakoniColton2014}, assume time-harmonic acoustic waves with wavenumber $k>0$ are traveling through a medium whose index of refraction $n\in L^\infty(\mathbb{R}^d)$ only varies over a region of compact support. Assume a constant unitary index of refraction ($n=1$) outside of this region. Denote the contrast of the media by $m=n-1$ and by $D=\text{supp}(n-1)$ the shape and location of hidden scatterers to be recovered. Under these assumptions, a scattering experiment proceeds when an incident wave $u^i(\cdot;y)$ (whose form is specified below) is emitted from a point $y\in\Gamma_i\subseteq\mathbb{R}^{d-1}$. The resulting scattered field $u^s(x;y)$ measured at $x\in\Gamma_m\subseteq\mathbb{R}^{d-1}$ satisfies 
\begin{align*}
& \Delta u^s + k^2 nu^s = -k^2mu^i \\
&  \lim_{r\to\infty} r^{-(d-1)/2} \left(\frac{\partial u^s}{\partial r} - i k u^s\right) = 0 \quad \text{ where } \,\, r=|x|.
\end{align*}
The (Sommerfeld) radiation condition is required for uniqueness. Assume the scattered field is measured on the same surface as the surface from which incident fields are emitted,  $\Gamma_i=\Gamma_m=:\Gamma$ where $\Gamma$ is a closed, connected, and bounded surface containing $D$
 
The qualitative methods we discuss in this article depend on a near-field data operator $\mathcal{N}: L^2(\Gamma)\to L^2(\Gamma)$ defined by 
\begin{equation*}
(\mathcal{N}g)(x)= \int_{\Gamma} u^s(x ;y) g(y) \, \dx{S(y)}, \quad \textrm{ for } \,\, x \in \Gamma.
\end{equation*}
The inverse scattering problem is to \textit{estimate $D$ from $u^s(x;y)$, $x,y\in\Gamma$.}

The factorization of $\mathcal{N}$ is derived using the Lippmann-Schwinger equation \cite{ColtonKress2013}
\begin{equation}\label{eqn:LS}
u^s(x;y) = -k^2\int_{D} m(z) \Phi_k(x,z)\left(u^s(x;y)+u^i(x;y)\right)\dx{V(z)},
\end{equation}
where  $\Phi_k$ is the fundamental solution to the Helmholtz equation, 
 \begin{equation*}
  \Phi_k(x,y) := \begin{cases}
                             \frac{i}{4} H_0^{(1)}(k|x-y|) & d=2 \\
                             \frac{\exp{ik|x-y|}}{4\pi|x-y|} & d=3. 
                            \end{cases}
 \end{equation*}
Following for example \cite{AudibertHaddar2017, HarrisRome2017}, $\mathcal{N} = \mathcal{G}\mathcal{A}\mathcal{H}$ where $\mathcal{H}:L^2(\Gamma)\to L^2(D)$ is defined by
\begin{equation*} 
(\mathcal{H}g)(z) = \int_\Gamma u^i(z;y)g(y)\dx{s(y)}, \quad \textrm{ for } \,\, z\in D
\end{equation*}
and $\mathcal{G}:L^2(D)\to L^2(\Gamma)$ is defined by
\begin{equation*}
(\mathcal{G} f)(x) = \int_{D} \Phi_k(x,z)f(z)\dx{V(z)}, \quad \textrm{ for } \,\, x\in\Gamma. 
\end{equation*} 
The operator $\mathcal{A}:L^2(D)\to L^2(D)$ is defined as
\begin{equation*}
(\mathcal{A}\psi)(z) = -k^2m(z)\left(w(z)+\psi(z)\right), \quad  \textrm{ for } \,\, x\in D
\end{equation*}
where $w\in H^1_{\text{loc}}(\mathbb{R}^d)$ the solution to 
\begin{align}\label{eqn:helmholtz}
&\Delta w + k^2 n w = -k^2m \psi \qquad \text{ in } \mathbb{R}^d \\
& \lim_{r\to\infty} r^{-(d-1)/2} \left(\frac{\partial w}{\partial r} - i k w\right) = 0 \quad \text{ where } \,\, r=|x|.
\end{align}

\subsection{Infimum criterion for near-field scattering}
Three hypothesis need to be met in order to relate $D$ to the solution of an optimization problem, according to Theorem \ref{thm:coerciveInfCrit} and Corollary \ref{cor:extendedEstimationInf}. The first is to demonstrate $\mathcal{N}$ has a symmetric factorization, which is not immediate from the factorization given above. The second is that the middle operator of this factorization is coercive. Finally, we must relate $D$ to $\mathcal{R}(\mathcal{H}^*)$ in accordance with Corollary \ref{cor:extendedEstimationInf}. Each of these points has been the subject of recent research, which we summarize and discuss here. 

To meet the requirement that $\mathcal{N}$ have a symmetric factorization, $\mathcal{N}=\mathcal{H}^*\mathcal{A}\mathcal{H}$, we need to show $\mathcal{G}=\mathcal{H}^*$. Formally, a short calculation reveals that
\begin{equation*}
\left(\mathcal{H}^*f\right)(x) = \int_{D} \overline{u^i(z;x)}f(z) \dx{V(z)}, \quad x\in\Gamma. 
\end{equation*}
This suggests using non-physical incident waves of the form $u^i(x;y)=\overline{\Phi_k(x,y)}$ leading to $\mathcal{G}=\mathcal{H}^*$. This is a common practice \cite{Arens2011, ArensKirsch2003, Kirsch2007, HarrisRome2017} and it can be shown that these non-physical incident fields can be approximated arbitrarily-well with physical incident fields\footnote{Another option is to use physical incident waves $u^i(x;z)=\Phi_k(x,y)$, but process the data differently. Indeed, \cite{AudibertHaddar2017, HuEtAl2014} analyze an operator $\mathcal{L}=\mathcal{B}\mathcal{N}$ where $\mathcal{B}$ is some linear operator which is both computable and which modifies $\mathcal{N}$ such that $\mathcal{L}$ has the correct factorization.}.

The second difficulty in applying Theorem \ref{thm:coerciveInfCrit} is the need for $\mathcal{A}$ to be coercive. There are numerous assumptions on $m$ known to ensure this \cite{AudibertHaddar2017,CakoniColtonHaddar2016}. We make the following assumption throughout that is sufficient for coercivity of $\mathcal{A}$. 

\begin{assumption}[Adapted from \cite{AudibertHaddar2017,HarrisRome2017}]\label{ass:coercive}
Assume that $n\in L^\infty(D)$ and that there is a constant $n_0>0$ such that either 
\begin{itemize} 
\item $\text{Im }n(x) \geq n_0$ for almost all $x\in \overline{D}$ 
 \item $1-n(x)\geq n_0$ for almost all $x\in \overline{D}$ 
 \item $n(x)-1\geq n_0$ for almost all $x\in \overline{D}$.
 \end{itemize}
\end{assumption}

Under this Assumption \ref{ass:coercive}, we have that there exists a $c>0$ such that for any $\varphi\in\mathcal{R}(\mathcal{H})\subset L^2(D)$, 
$c\|\varphi\|^2_{L^2(D)} \leq \big| (\mathcal{A}\varphi , \varphi)_{L^2(D)} \big|$ and hence can apply Theorem \ref{thm:coerciveInfCrit}.

The final assumption required to apply an infimum criterion for the localization of $D$ is relating $D$ to $\mathcal{R}(\mathcal{H}^*)$. This relies on statements about an interior transmission eigenvalue problem,
\begin{equation}\label{eqn:TEv}
\begin{split}
&\Delta u + k^2n u = 0 \quad \text{ in } D,  \qquad\qquad
\Delta v + k^2v = 0 \quad \text{ in } D,  \\
&(u-v) = 0 \quad \text{ on } \partial D, \qquad\qquad
\pd{}{\nu}(u-v) = 0 \quad \text{ on } \partial D
\end{split}
\end{equation}
The corresponding \textit{transmission eigenvalues} are the $k \in \mathbb{C}$ such that there are non-trivial solutions $(u,v)\in L^2(D)\times L^2(D)$ to \eqref{eqn:TEv} with difference $u-v\in H^2(D)$. The results we state below depend on the wavenumber of the incident field \textit{not} being a transmission eigenvalue. 

\begin{assumption}\label{ass:TEv}
The wavenumber $k$ is not a transmission eigenvalue for \eqref{eqn:TEv}. 
\end{assumption}
There are numerous results demonstrating that the set of transmission eigenvalues is almost empty (i.e., countable without finite accumulation points) \cite{CakoniColtonHaddar2016}. For example, if $m$ is strictly positive in a neighborhood around $\partial D$, then the set of transmission eigenvalues is empty almost everywhere. This assumption provides a link between the location of $D$ and an infimum criterion. 

\begin{lemma}[\cite{HarrisRome2017}, Theorem 3.2]
Let $\mathcal{Z}\subseteq\mathbb{R}^d$ be a sampling region such that $D\subseteq\mathcal{Z}\subseteq \text{interior}(\Gamma)$. Under Assumption \ref{ass:TEv},
\begin{equation*}
\Phi_k(\cdot,z)\in\mathcal{R}(\mathcal{H}^*) \qquad \text{ if and only if } \qquad z\in D. 
\end{equation*}
\end{lemma}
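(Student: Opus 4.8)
The plan is to reduce the claim to a statement about the range of the volume potential and then to argue the two implications separately. First I would record that, with the non-physical choice $u^i(x;y)=\overline{\Phi_k(x,y)}$ made above, the adjoint $\mathcal{H}^*$ coincides with the volume potential $\mathcal{G}$, so that $\mathcal{R}(\mathcal{H}^*)=\mathcal{R}(\mathcal{G})$ with $(\mathcal{G}f)(x)=\int_D \Phi_k(x,z)f(z)\dx{V(z)}$. It therefore suffices to characterize when $\Phi_k(\cdot,z_0)|_\Gamma$ is reproduced on $\Gamma$ by the volume potential of some density $f\in L^2(D)$. Throughout I would use that $\mathcal{G}f$ extends to a radiating $H^2_{\mathrm{loc}}$ solution of $\Delta(\mathcal{G}f)+k^2(\mathcal{G}f)=-f$ on all of $\mathbb{R}^d$ (with $f$ extended by zero), and that radiating solutions are determined outside a closed surface by their Cauchy data there.

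For the necessity direction ($z_0\notin D$) I would argue by contradiction. If $\mathcal{G}f=\Phi_k(\cdot,z_0)$ on $\Gamma$ for some $f\in L^2(D)$, then $w:=\mathcal{G}f-\Phi_k(\cdot,z_0)$ is a radiating solution of the Helmholtz equation exterior to $\Gamma$ with vanishing Dirichlet data on $\Gamma$. Uniqueness for the exterior Dirichlet problem forces $w\equiv 0$ in the unbounded component, and since both $\mathcal{G}f$ and $\Phi_k(\cdot,z_0)$ solve the Helmholtz equation in the region between $\partial D$ and $\Gamma$, unique continuation propagates $w\equiv 0$ up to a punctured neighborhood of $z_0$. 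This contradicts the fact that $\Phi_k(\cdot,z_0)$ is singular at $z_0$ while $\mathcal{G}f$ is smooth there (as $z_0\notin\overline{D}$), so $\Phi_k(\cdot,z_0)\notin\mathcal{R}(\mathcal{G})$. Notably this direction uses neither Assumption \ref{ass:coercive} nor Assumption \ref{ass:TEv}.

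For the sufficiency direction ($z_0\in D$) I would construct the density by solving an interior transmission problem, which is exactly where Assumptions \ref{ass:coercive} and \ref{ass:TEv} enter. Concretely, I would seek $(u,v)\in L^2(D)\times L^2(D)$ with $u-v\in H^2(D)$ solving the inhomogeneous version of \eqref{eqn:TEv}, namely $\Delta u+k^2 n u=0$ and $\Delta v+k^2 v=0$ in $D$, with $u-v$ and its normal derivative on $\partial D$ prescribed to equal the Cauchy data of $\Phi_k(\cdot,z_0)$. Defining $f$ to be the corresponding contrast source (proportional to $m\,u$ via \eqref{eqn:LS}, hence an element of $L^2(D)$) and gluing $u-v$ inside $D$ to $\Phi_k(\cdot,z_0)$ outside produces a radiating $H^2_{\mathrm{loc}}$ field whose source is exactly $f$; by uniqueness of radiating solutions this glued field equals $\mathcal{G}f$, and by construction it agrees with $\Phi_k(\cdot,z_0)$ outside $D$, hence on $\Gamma$. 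This shows $\Phi_k(\cdot,z_0)\in\mathcal{R}(\mathcal{G})=\mathcal{R}(\mathcal{H}^*)$.

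The main obstacle is the solvability of this inhomogeneous interior transmission problem. I would establish it by the standard variational reformulation for $u-v$: the coercivity supplied by Assumption \ref{ass:coercive} renders the associated sesquilinear form Fredholm of index zero, while Assumption \ref{ass:TEv} (that $k$ is not a transmission eigenvalue) excludes nontrivial solutions of the homogeneous problem \eqref{eqn:TEv} and so supplies injectivity; together these give existence and uniqueness. The remaining technical points—that the glued field is genuinely $H^2_{\mathrm{loc}}$ across $\partial D$ (so no spurious single-layer source appears) and that it satisfies the radiation condition—follow from the matching of Cauchy data and the Lippmann--Schwinger representation \eqref{eqn:LS}, and I would treat them as routine relative to the transmission-problem solvability.
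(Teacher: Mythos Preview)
The paper does not supply its own proof of this lemma; it is quoted verbatim as Theorem~3.2 of \cite{HarrisRome2017} and used as a black box. Your proposal therefore cannot be compared against an in-paper argument, but it can be compared against the standard proof in the factorization-method literature (which is what \cite{HarrisRome2017} carries out).

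Your outline is exactly that standard argument and is essentially correct. Identifying $\mathcal{H}^*$ with the volume potential $\mathcal{G}$, then (i) ruling out $z_0\notin D$ by unique continuation from the exterior Dirichlet problem and the singularity mismatch between $\Phi_k(\cdot,z_0)$ and the $H^2_{\mathrm{loc}}$ field $\mathcal{G}f$, and (ii) handling $z_0\in D$ by solving the inhomogeneous interior transmission problem with Cauchy data $\Phi_k(\cdot,z_0)|_{\partial D}$ and taking $f=k^2 m\,u\in L^2(D)$ as the density, is precisely the route taken in \cite{HarrisRome2017} and in the broader literature (e.g.\ \cite{KirschGrinberg2008,CakoniColtonHaddar2016}). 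Your remark that the necessity direction needs neither Assumption~\ref{ass:coercive} nor Assumption~\ref{ass:TEv}, while the sufficiency direction uses both (one for the Fredholm property of the ITP, the other for injectivity), is also accurate; the lemma as stated in the present paper cites only Assumption~\ref{ass:TEv}, but Assumption~\ref{ass:coercive} is in force throughout the section. One small point you might tighten in an actual write-up: since $D=\mathrm{supp}(n-1)$ is closed here, the boundary case $z_0\in\partial D$ deserves a sentence (the $H^2$ regularity of $\mathcal{G}f$ versus the non-$H^2$ singularity of $\Phi_k(\cdot,z_0)$ still gives the contradiction, but your phrasing ``as $z_0\notin\overline{D}$'' skips it).
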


All together, we thus have an infimum criterion to serve as the basis for a sparse-DSM for inverse acoustic scattering. 

\begin{corollary}\label{cor:infCritScattering}
Under Assumptions \ref{ass:coercive} and \ref{ass:TEv}, 
\begin{equation*}
z\in D \quad \text{ if and only if } \quad \mathcal{I}^{(1)}(z) = \inf_{g\in X}\left\{\left|\left(g,\mathcal{N}^*g\right)_{L^2(\Gamma)} \right|^2\,:\, (g,\Phi_k(\cdot,z))_X=1\right\}>0. 
\end{equation*}
\end{corollary}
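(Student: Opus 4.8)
The plan is to recognize this statement as a direct specialization of Corollary \ref{cor:extendedEstimationInf} to the near-field operator $\mathcal{N}$, once I check that each of the three hypotheses assembled in this subsection matches the requirements of that corollary (equivalently, of Theorem \ref{thm:coerciveInfCrit}). I would set $X=L^2(\Gamma)$ and $Z=L^2(D)$, take $\mathcal{Q}=\mathcal{H}$ so that $\mathcal{Q}^*=\mathcal{H}^*$ and $\mathcal{R}(\mathcal{Q}^*)=\mathcal{R}(\mathcal{H}^*)$, and use the sampling function $\varphi(\cdot;z)=\Phi_k(\cdot,z)$.

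First I would dispense with the apparent mismatch between the functional, which is written in terms of $\mathcal{N}^*$, and the factorization $\mathcal{N}=\mathcal{H}^*\mathcal{A}\mathcal{H}$ established above. Because the inner product is conjugate symmetric and $(\mathcal{N}g,g)_{L^2(\Gamma)}=(g,\mathcal{N}^*g)_{L^2(\Gamma)}$, one has $(g,\mathcal{N}^*g)_{L^2(\Gamma)}=\overline{(g,\mathcal{N}g)_{L^2(\Gamma)}}$, so the two functionals agree in modulus and $|(g,\mathcal{N}^*g)|^2=|(g,\mathcal{N}g)|^2$. Hence $\mathcal{I}^{(1)}(z)$ is unchanged if $\mathcal{N}^*$ is replaced by $\mathcal{N}$, and I can apply Corollary \ref{cor:extendedEstimationInf} with $\mathcal{L}=\mathcal{N}$ and the coercive middle operator $\mathcal{A}$ directly, without having to transfer coercivity to $\mathcal{A}^*$.

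Next I would verify the three hypotheses in turn. The symmetric factorization $\mathcal{N}=\mathcal{H}^*\mathcal{A}\mathcal{H}$ is exactly of the form $\mathcal{Q}^*\mathcal{A}\mathcal{Q}$ required by Theorem \ref{thm:coerciveInfCrit}, and it holds once the non-physical incident fields $u^i(x;y)=\overline{\Phi_k(x,y)}$ are used so that $\mathcal{G}=\mathcal{H}^*$. Coercivity of $\mathcal{A}$ on $\mathcal{R}(\mathcal{H})$ follows from Assumption \ref{ass:coercive}, which furnishes the constant $c>0$ with $c\|\varphi\|^2_{L^2(D)}\le|(\mathcal{A}\varphi,\varphi)_{L^2(D)}|$ for every $\varphi\in\mathcal{R}(\mathcal{H})$; this is precisely the hypothesis \eqref{eqn:coerciveAssump}. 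Finally, the range characterization needed to invoke Corollary \ref{cor:extendedEstimationInf}, namely that $z\in D$ if and only if $\varphi(\cdot;z)=\Phi_k(\cdot,z)\in\mathcal{R}(\mathcal{H}^*)$, is supplied by the Lemma adapted from \cite{HarrisRome2017} under Assumption \ref{ass:TEv}. With all three hypotheses confirmed, Corollary \ref{cor:extendedEstimationInf} yields the stated equivalence immediately.

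I do not expect a genuinely hard analytic step here, since the substantive work (the symmetric factorization, the coercivity estimate, and the transmission-eigenvalue range characterization) is imported from the cited results and summarized above. The only point requiring care is the bookkeeping just described: confirming that the appearance of $\mathcal{N}^*$ in the definition of $\mathcal{I}^{(1)}(z)$ leaves the functional unchanged, so that it is the coercive factorization of $\mathcal{N}$, and not of its adjoint, that the corollary consumes. I would spell this out explicitly to forestall any impression that a separate coercivity argument for $\mathcal{A}^*$ is needed.
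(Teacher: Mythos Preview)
Your proposal is correct and follows exactly the paper's approach: the corollary is stated without a separate proof because it is the direct specialization of Corollary \ref{cor:extendedEstimationInf} once the symmetric factorization $\mathcal{N}=\mathcal{H}^*\mathcal{A}\mathcal{H}$, the coercivity of $\mathcal{A}$ (Assumption \ref{ass:coercive}), and the range characterization of $\Phi_k(\cdot,z)$ (Assumption \ref{ass:TEv} and the cited Lemma) have been assembled. Your additional remark that $|(g,\mathcal{N}^*g)|^2=|(g,\mathcal{N}g)|^2$ is a useful clarification the paper leaves implicit.
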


\subsection{Direct sampling method for extended obstacles}
To derive a DSM, we couple Corollary \ref{cor:infCritScattering} with the discussion in Section \ref{sec:beamforming}. Indeed, if $z\in D$, $\mathcal{I}_{\emptyset}^{(1)}(z):=\left|\left(\mathcal{N}\Phi_k(\cdot,z),\Phi_k(\cdot,z)\right)_{L^2(\Gamma)}\right|^2>0$. Using this $\mathcal{I}_{\emptyset}^{(1)}(z)$ as a basis for a DSM is similar to those discussed in \cite{LeemLiuPelekanos2018,Liu2017}.

To understand the behavior of $\mathcal{I}_{\emptyset}^{(1)}(z)$ away from $D$, assume that $\Gamma=\partial B_R$ where $B_R$ is the ball centered at the origin with radius $R$. It is known \cite{ChenChenHuang2013} that
\begin{equation*}
(\mathcal{H}\Phi_k(\cdot,z))(x) = \int_{\partial B_R} \overline{\Phi_k(x, y)} \Phi_k(z, y) \, \dx{S(y)}  = \text{Im }{\Phi_k(z, x)} + w(z, x)
\end{equation*}
where $w$ satisfies the estimate $\| w\|_{W^{1,\infty}(\mathcal{Z} \times \mathcal{Z})} \leq CR^{1-d}$ so long as $D \subset \mathcal{Z} \subset B_R$. The factorization of $\mathcal{N}$ and the definition of $\Phi_k$ implies that there is a $C>0$ so that
\begin{align*}
\mathcal{I}_{\emptyset}^{(1)}(z)= \left|\left(\mathcal{A} \mathcal{H}\Phi_k(\cdot,z), \mathcal{H} \Phi_k(\cdot,z)\right)_{L^2(D)}\right|^2  &\leq C \big\| \mathcal{H} \Phi_k(\cdot,z)  \big\|^4_{L^2(D)}\\
                                                                                                               &= C \big\| \text{Im }{\Phi_k ( z , x)} + w(z\, , x) \big\|^4_{L^2(D)}\\
                                                                                                               &\leq C\left( \mathrm{dist}(z,D)^{\frac{1-d}{2} } + R^{1-d} \right)^4.
\end{align*}
Here we have used the property that the imaginary part of the fundamental solution, $J_0 (t)$, which has a decay rate of $t^{-1/2}$ as $t \to \infty$ for the $\mathbb{R}^2$ case and for the case in $\mathbb{R}^3$, we have used that $j_0 (t)$ has  decay rate of  $t^{-1}$ as $t \to \infty$. 
This implies that the indicator $\mathcal{I}_0^{(1)}(z)$ decays as the sampling point $z$ moves away from the scatterer $D$ at a rate related to the distance between $z$ and $D$ and the distance between $D$ and measurement locations. 

\subsection{Direct sampling for point obstacles}
In the case of weakly-scattering point scatterers, we can adapt our DSM to use Theorem \ref{thm:boundedInfCrit} instead of Theorem \ref{thm:coerciveInfCrit}. In doing this, we derive a DSM more similar to that introduced in \cite{Ito2012, Ito2013, Potthast2010} and elsewhere. Indeed, assume $D=\cup_{j=1}^J D_j$ is a union of small balls centered at points $z_j\in\mathbb{R}^d$ each with radius $\epsilon\ll 1$. Let $m(z)=\sum_{j=1}^J m_j\mathds{1}_{D_j}(z)$ be constant on each ball with $m=\mathcal{O}(1)$. Neglecting multiple scattering, using $u^i(x;y)=\overline{\Phi_k(x,y)}$ for consistency, and using the Lippmann-Schwinger equation \eqref{eqn:LS} yields the point-obstacle Born approximation \cite{ColtonKress2013}
\begin{equation*}
u^s_B(x;y) := -k^2\sum_{j=1}^J m_j\Phi_k(x,z_j)\overline{\Phi_k(y,z_j)}.
\end{equation*}
In this case, the near field operator becomes
\begin{equation*}
(\mathcal{N}_b g)(x) = -k^2\sum_{j=1}^J m_j\Phi_k(x,z_j)\int_{\Gamma}\overline{\Phi_k(y,z_j)}g(y)\dx{s(y)}. 
\end{equation*}
Using the linear independence  \cite{CakoniRezac2017} of the sets $\{\Phi_k(x,z_j), x\in\Gamma, j=1,\ldots,J\}$ and $\{\overline{\Phi_k(y,z_j)}, y\in\Gamma, j=1,\ldots,J\}$ allows us to apply Corollary \ref{cor:pointEstimationInf} to the Born approximation data and leads to the DSM indicator functional
\begin{equation}\label{eqn:bornDSM}
\mathcal{I}_{\emptyset}^{(0)}(z) = \|\mathcal{N}_b\Phi_k(\cdot,z)\|_{L^2(\Gamma)}^2. 
\end{equation}
In the same way as for extended inhomogeneities, 
\begin{equation*}
\mathcal{I}_{\emptyset}^{(0)}(z)\leq C \max_j \left(\text{Im}\Phi_k(z,z_j)+R^{1-d}\right)^2 \leq  C  \max_j \left( \mathrm{dist}(z,z_j)^{\frac{1-d}{2} } + R^{1-d} \right)^2
\end{equation*}
for some $C>0$. As $z\to z_j$, we expect to see peaks in $\mathcal{I}_{\emptyset}^{(0)}(z)$ and as $z$ moves away from each $z_j$ we expect to see valleys. 

Interestingly, the same bound holds true (with a different constant) for $\mathcal{I}_{\emptyset}^{(0)}(z)$ when the scattered field is only measured at one location. This DSM, with indicator functional $\mathcal{I}_{\emptyset}^{(0)}(z)=\left|\mathcal{N}_b\Phi_k(x,z)\right|^2$, has same form as those in \cite{Ito2012, Ito2013, Potthast2010}. Requiring less data is a big benefit of the DSMs over classical qualitative methods. This form of the DSM, however, loses its relationship with \ref{thm:boundedInfCrit}.

\section{Computational Results}\label{sec:numerics}
In this section we will present a number of numerical examples demonstrating the effectiveness of our new scheme, as well as how it relates to more commonly-used qualitative methods. All simulated examples will be polluted with additive Gaussian noise of the form $f^{\text{measured}} = f^{\text{clean}} + \sigma \epsilon. $ Here, $\epsilon$ is a Gaussian random variable with mean $0$ and variance $1$ in its real and imaginary parts. The standard deviation $\sigma$ is chosen to ensure the measured data has a particular signal-to-noise ratio (SNR); if there are $N$ measurements, we define $\sigma$ according to $\sigma = \|f\|/\sqrt{N\times SNR} $, which implies that SNR $\approx \|f\|/\mathbb{E}\|\sigma \epsilon\|$. We remark that for the purposes of the examples we consider below, the \textit{values} of th	e indicator functions are meaningless. They can be normalized or transformed with a monotonic function to any value with no change in interpretation and hence should not be compared between different algorithms. Indeed, in Section \ref{sec:timeHarmonicPoint} we plot indicator functions on a $10\log_{10}$ scale which better captures their full range.

The first example is a particularly simple case with closed-form expressions for data, which we use to show some basic properties of sparse-DSM. The simulated data for the inverse scattering examples were calculated with a volume integral equation method described in \cite{Rezac2017}. It uses a low-order Galerkin discretization of the Lippmann-Schwinger equation \ref{eqn:LS}. 

We also apply the sparse-DSM to experimentally-measured data in \ref{sec:measuredAOA}. The measurement, which is described in more detail in that section and in \cite{WeissEtAl2019}, allows for AOA estimation in a realistic situation. It was collected by staff in the Communications Technology Laboratory at the National Institute of Standards and Technology.

\subsection{A 1-Dimensional AOA Example}\label{sec:simulatedAOA}
We first analyze a AOA estimation problem in one dimension. The data in this simplified problem takes the form
\begin{equation}\label{eqn:1DAOA}
f(x_m,t_{\ell}) = \sum_{j=1}^J \alpha_j s_j(t_{\ell}) \exp{(-ik m\Delta x u_j)}, \quad m=1,\ldots,M \text{ and } \ell=1,\ldots,L
\end{equation}
where $k>0$ and $\Delta x>0$ are considered known and $u_j\in[-1,1]$ are the unknowns-of-interest. For simplicity, we consider $s_j(t_\ell) = \exp{(-i \ell\Delta t u_j)}$ which are linearly independent for different $j$. This problem is very similar to the sparse spectral density estimation problem of finding the best representation of a signal in terms of a sparse Fourier series for which there are many applications and solution techniques \cite{Hokanson2013}. Since $s_j$ and $\varphi(x;u_j):=\exp{(-ik x u_j)}$ are linearly independent for different $j$, we can estimate $u_j$ based on Corollary \ref{cor:pointEstimationInf} . 

In Figure \ref{fig:1AOA_8points}, we show the transitioning of $k$-DSM from a standard DSM to a full infimum-criterion. There are 8 unknowns randomly placed in $[-1,1]$ in this problem indicated by semi-transparent vertical lines. Data is collected on $x_m=t_m=m\Delta$ for $m=0,\ldots,19$ and $\Delta=50/19$. Very small amounts of noise (SNR$\approx 3\times 10^5$) are added to the measurements. The figure indicates that the DSM cannot distinguish between each of the unknowns, particularly the middle ones. The infimum-criterion works well here because there is little noise and a larger amount of data. We also see that as $k_{\text{DSM}}$ increases, the estimates produced by the sparse-DSM improve; when $k_{\text{DSM}}=4$, estimates from the sparse-DSM are sharper than the DSM, but the right-most unknowns are still blurred. When $k_{\text{DSM}}=8$, the sparse-DSM performs as well as the inf-criterion. 

\begin{figure}[tbhp]
    \centering
	\subfloat[DSM]{\includegraphics[height=1.5in]{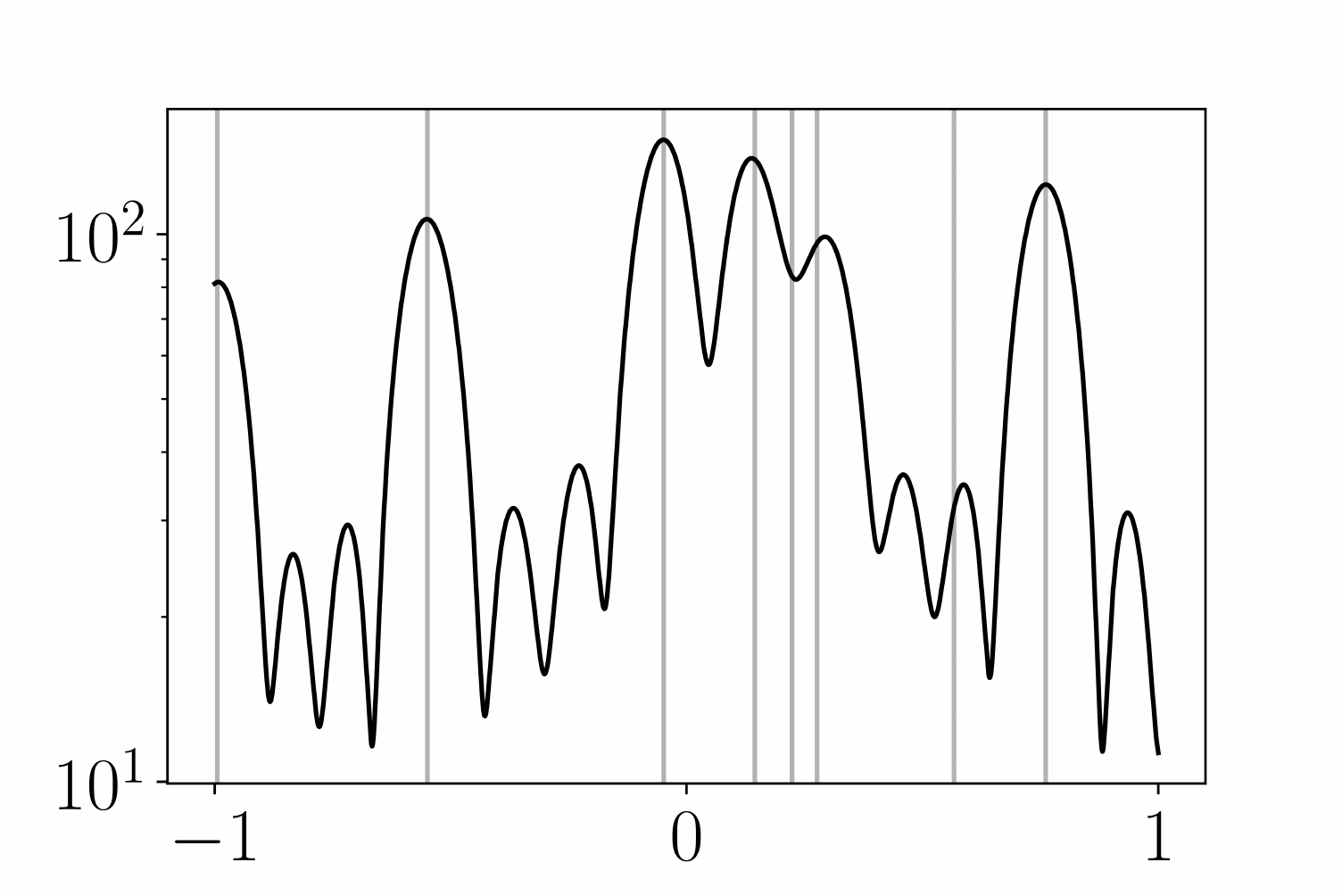}}
        \subfloat[$k$-DSM, $k_{\text{DSM}}=4$]{\includegraphics[height=1.5in]{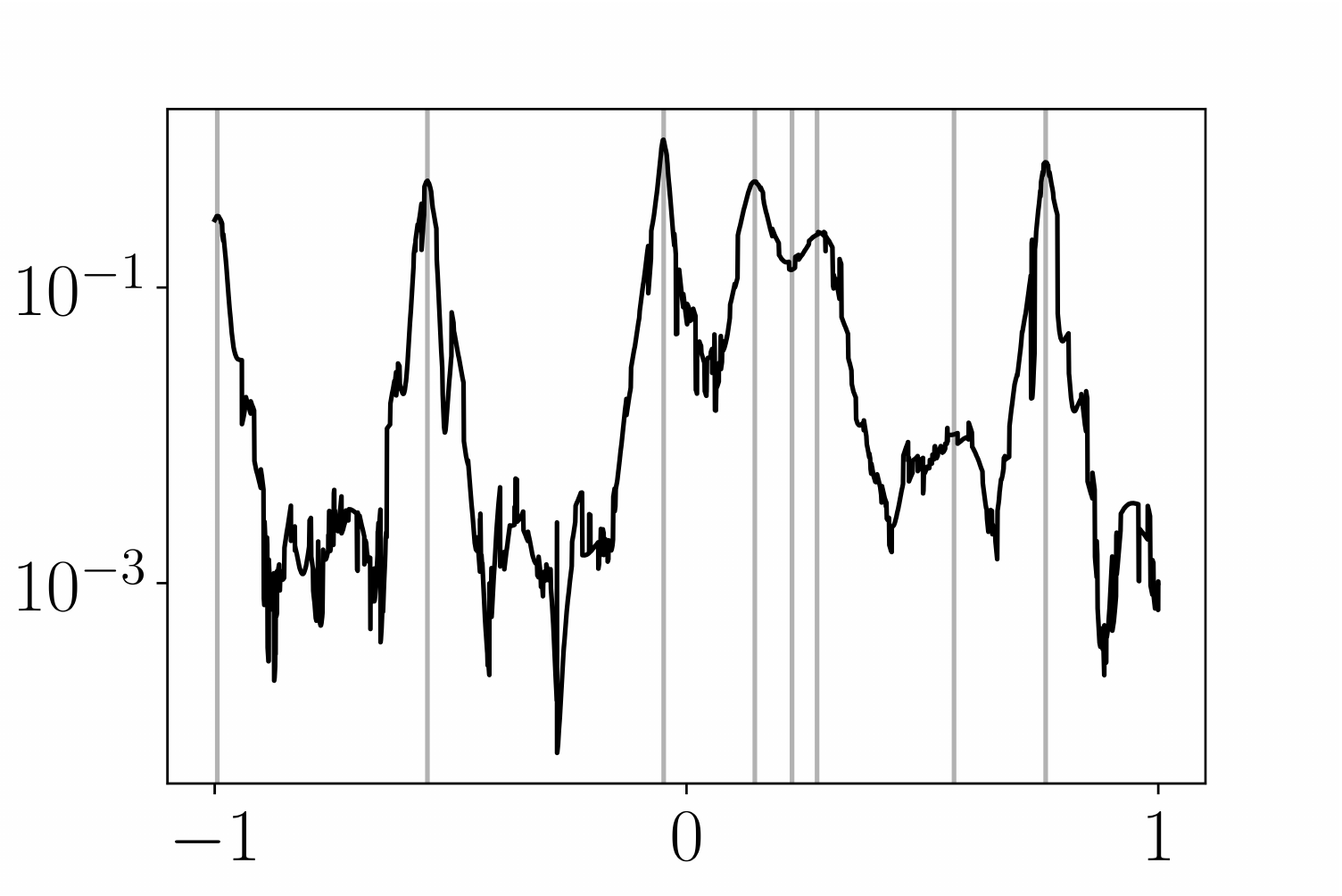}}\\
        \subfloat[$k$-DSM, $k_{\text{DSM}}=8$]{\includegraphics[height=1.5in]{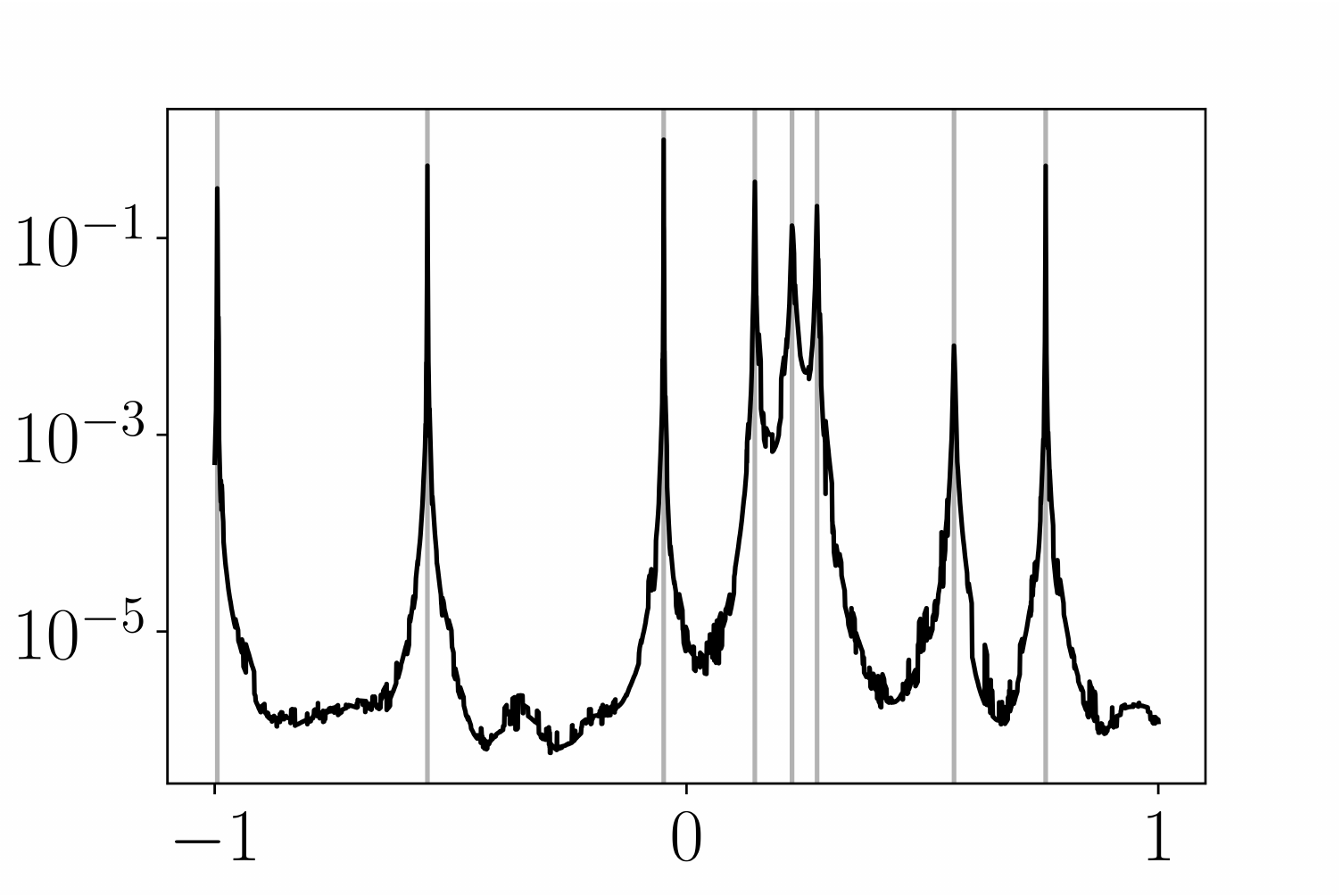}}
        \subfloat[Infimum-criterion estimate]{\includegraphics[height=1.5in]{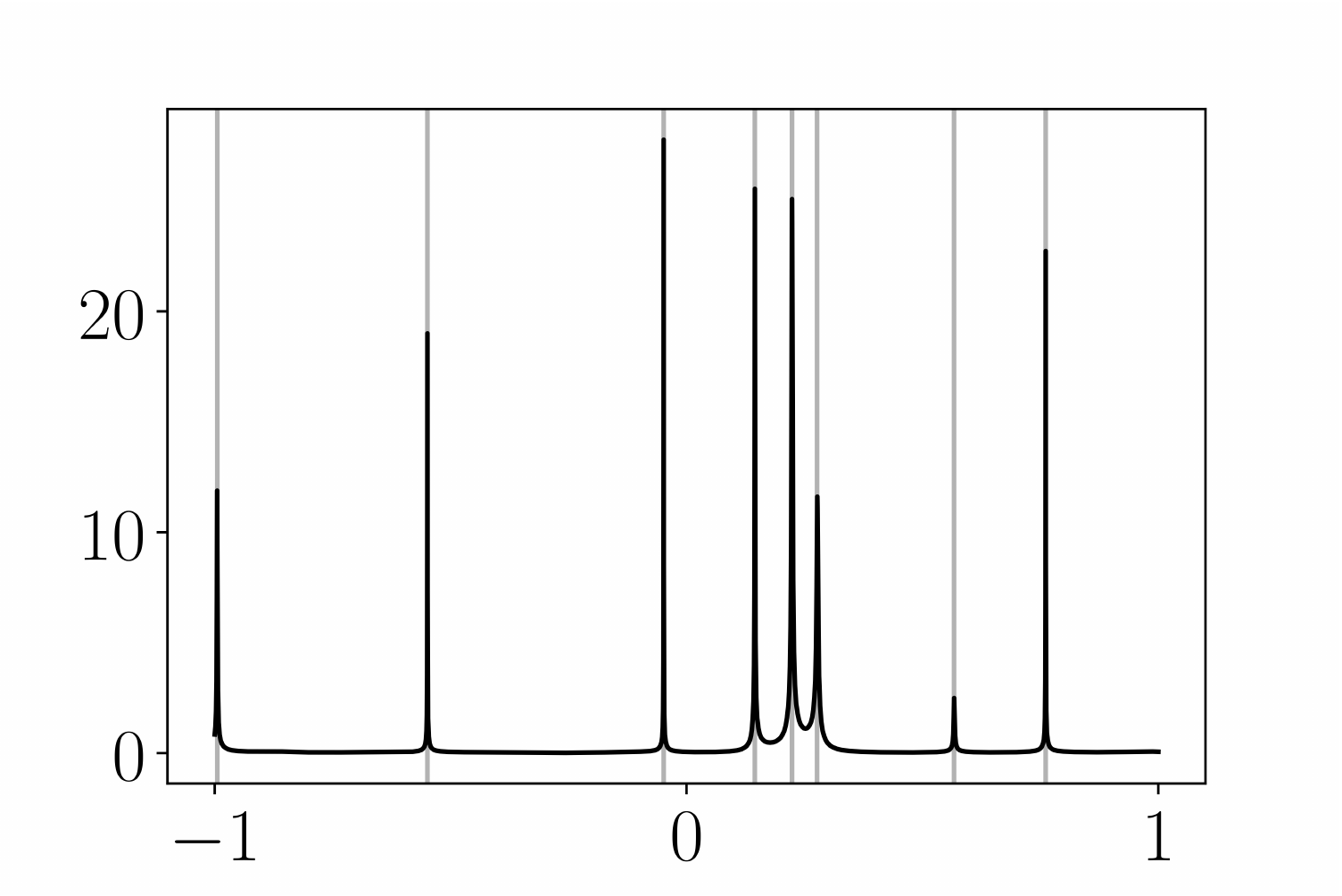}}
    \caption{Reconstructions of 8 1-dimensional AOAs.}\label{fig:1AOA_8points}
\end{figure}

One benefit of qualitative methods over traditional iterative-based techniques is their speed. This is particularly true of DSMs. Table \ref{tab:timing} compares an estimate of computational time between the DSM, $k$-DSM, and inf-criterion in the example discussed above. We calculate these times for two different data matrix sizes: $20\times 20$ and $100\times 100$. As expected, the DSM is significantly-faster than the other techniques - although it also does not perform as well. On the other hand, $k$-DSM is slower than the infimum-criterion on the small data example, but significantly faster for the larger-sized data matrix. While differences in implementation and computing resources effect on this sort of analysis (and so we omit more details), we expect that DSMs will typically be much faster than sparse-DSMs or infimum-criteria while sparse-DSMs will be of similar speed as infimum-criteria on small amounts of data and significantly faster for larger amounts of data. These computational trade-offs are similar for the each of more realistic examples which follow.

\begin{table}[htb]
\centering
\begin{tabular}{c||c|c}
  & $20\times 20$ measurements (s) & $100\times 100$ measurements (s) \\
\hline
DSM & 0.48 & 1.8 \\
$k$-DSM ($k_\text{DSM}=4$) & 3.7 & 6.1 \\
$k$-DSM ($k_\text{DSM}=8$) & 6.7 & 9.1 \\
Infimum-criterion & 3.1 & 56 
\end{tabular}
\caption{Timing of qualitative methods applied to the problem in Figure \ref{fig:1AOA_8points}.}\label{tab:timing}
\end{table}

\subsection{Reconstructions for AOA estimation with measured data}\label{sec:measuredAOA}
We now analyze data from a AOA estimation measurement done at the Communications Technology Laboratory at the National Institute of Standards and Technology. The data we analyze is a result of incident waves which were transmitted into an area with two metal rods. The resulting scattered waves were measured at an array of receivers. Measurements were done in the frequency domain on a grid of 26.5GHz to 40GHz with a 10MHz frequency step and were converted into the time domain prior to analysis. The spatial measurement array was a uniformly-spaced two-dimensional grid with a spacing of approximately 3.7mm. The measurement data consisted of 1225 spatial measurement locations each with 1350 time steps. Full details on the experimental procedure are given in \cite{WeissEtAl2019}. 

True AOAs were not known for this dataset, so we compare our techniques to the results of the MUSIC algorithm. MUSIC is known to perform well on this type of data, though there can be difficulties when the true number of AOAs is unknown \cite{Schmidt1986}. Figure \ref{fig:samurai_close} shows the estimated AOA values using different estimation techniques. Each algorithm is run on the sampling grid $[-1,1]^2$ though Figure \ref{fig:samurai_close} is zoomed-in to $[-0.25,0.25]\times[-0.25,0.5]$. Beamforming does not distinguish between the cylinders as reliably as the other three methods, which perform well. Indeed, the beamforming results have extra peaks to the right and left of the true peaks and the two true peaks merge together more significantly than the other methods. This is related both to the resolution issues depicted in Figure \ref{fig:DirichletKernel} and to the related side-lobe phenomenon \cite{VanVeenBuckley1988}.

\begin{figure}[tbhp]
\centering
\subfloat[MUSIC]{\includegraphics[height=1.5in]{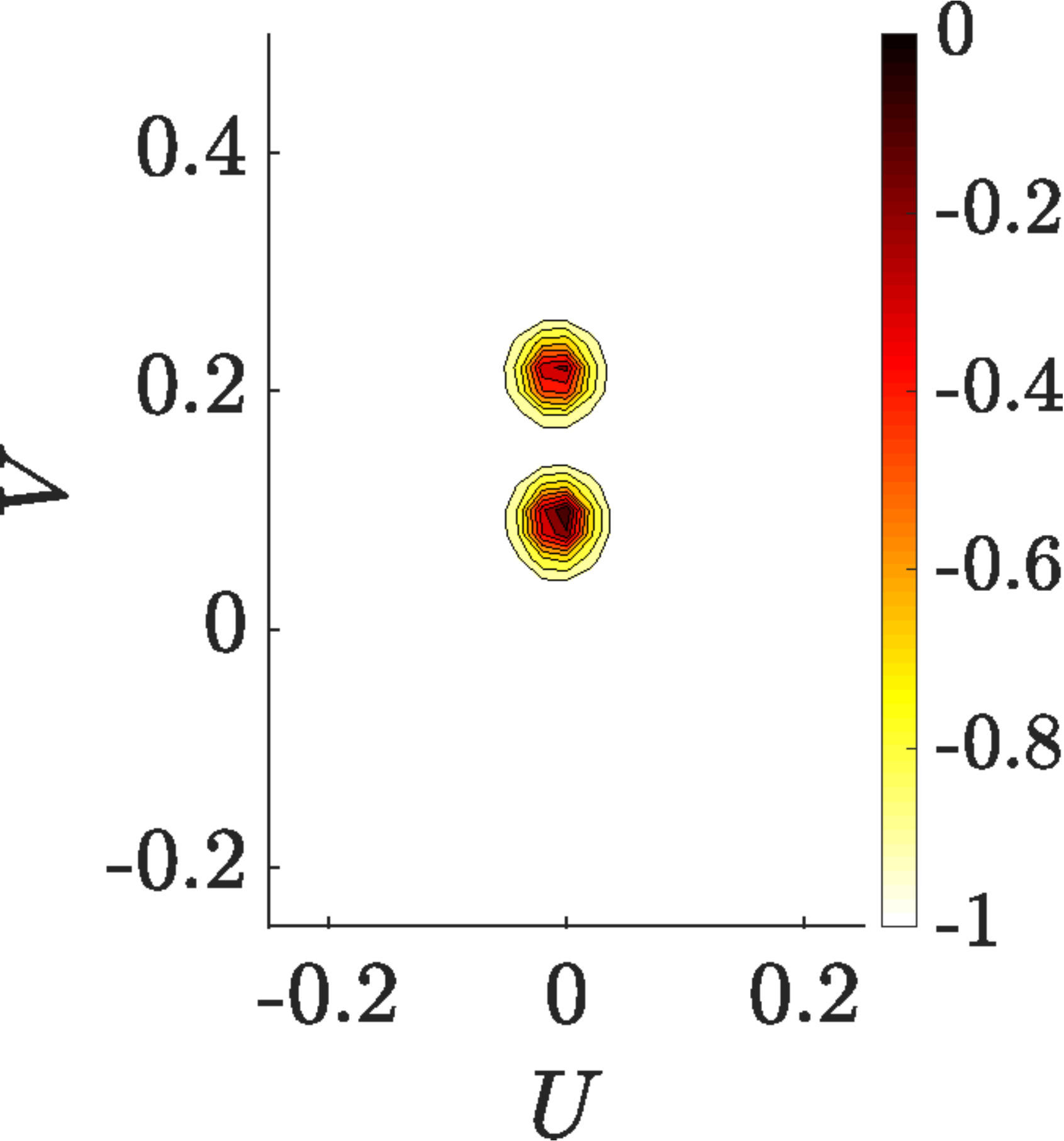}}\hfill
\subfloat[Beamforming]{\includegraphics[height=1.5in]{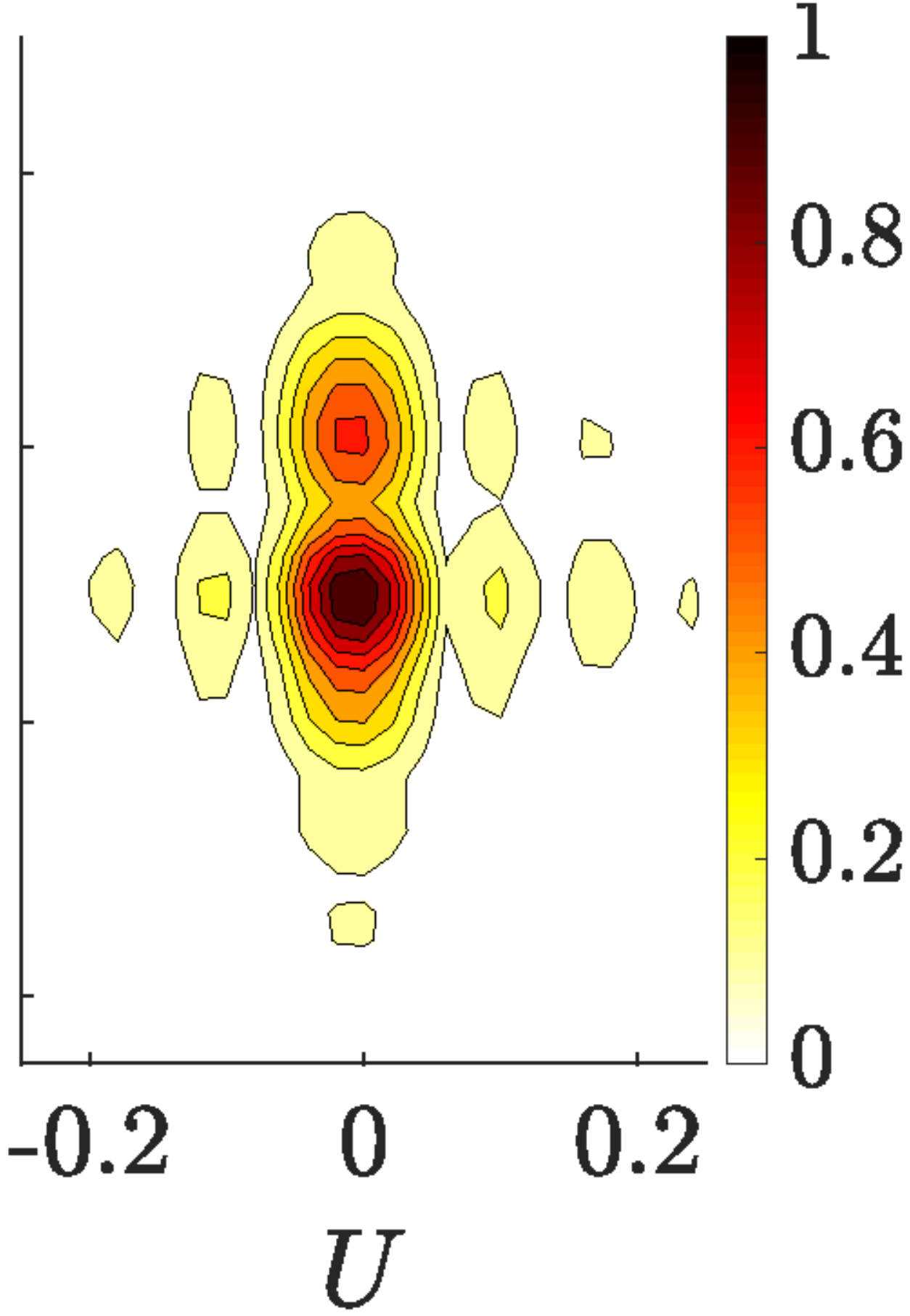}}\hfill
\subfloat[$k$-DSM, $k_{\text{DSM}}=1$]{\includegraphics[height=1.5in]{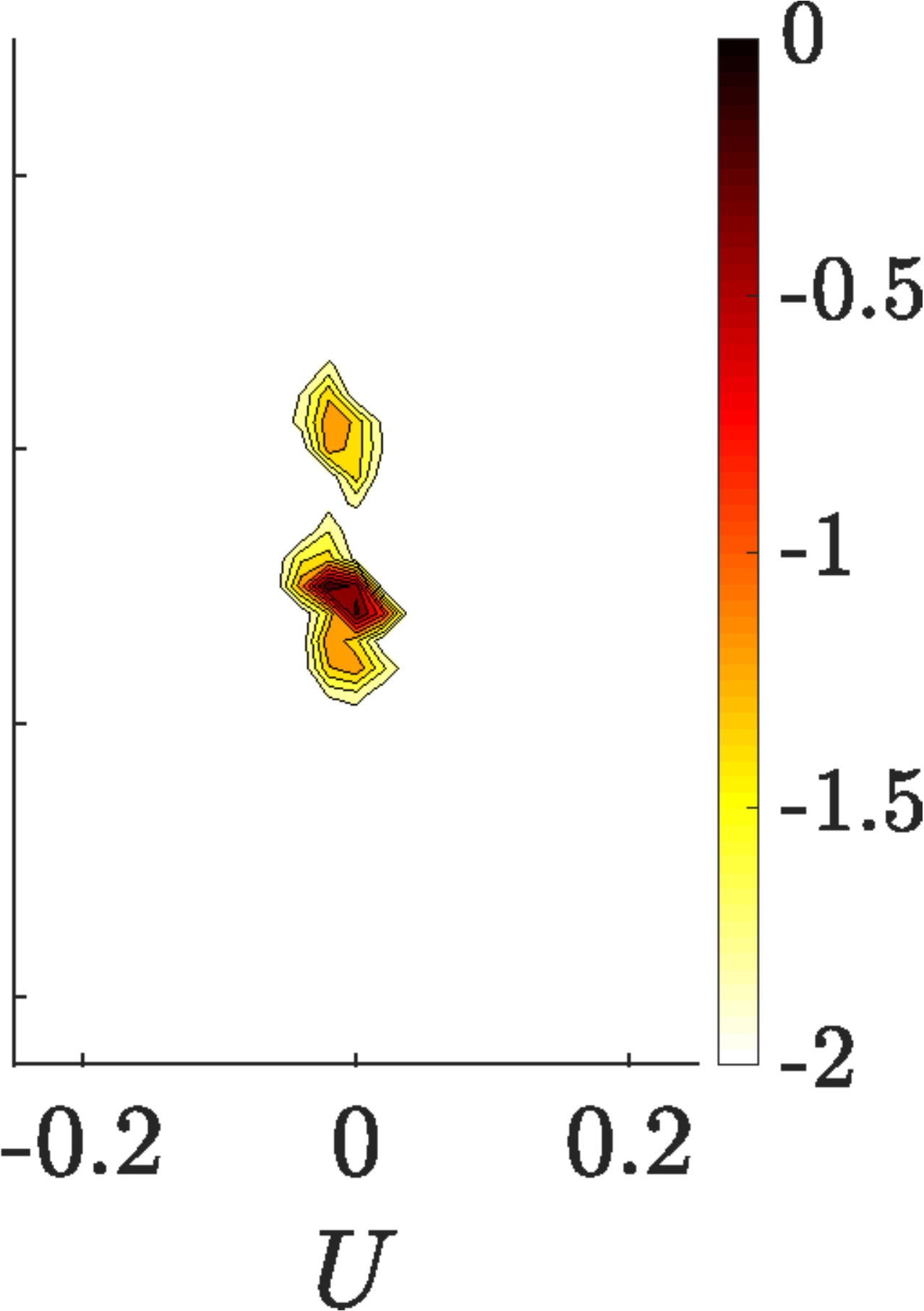}}\hfill
\subfloat[error-DSM, $\delta_{\text{DSM}}=0.94$]{\includegraphics[height=1.5in]{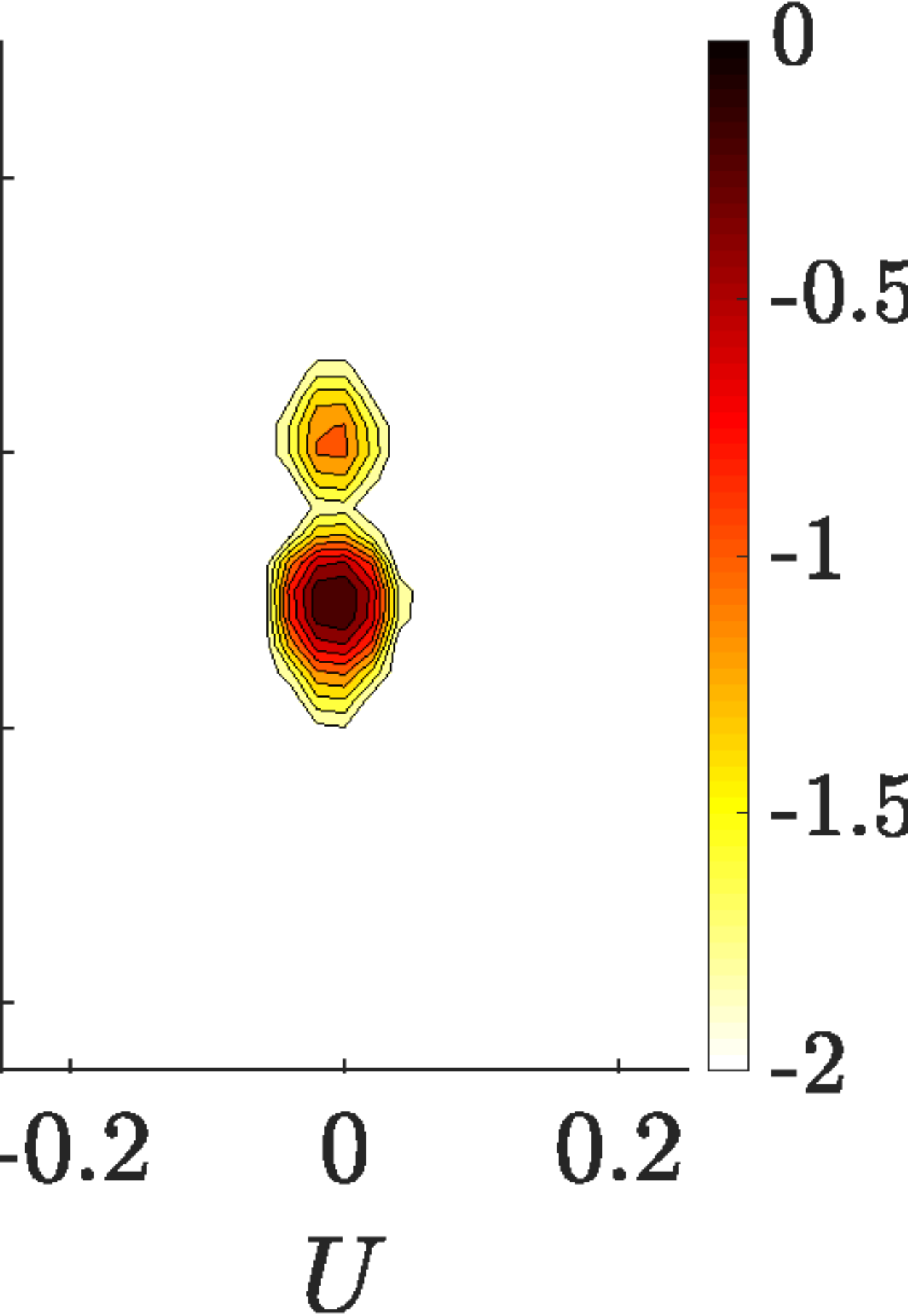}}
\caption{AOA estimates from two nearby cylinders.}\label{fig:samurai_close}
\end{figure}

\subsection{Reconstructions from acoustic scattering}\label{sec:timeHarmonicPoint}
Finally, we apply each of the sparse-DSMs to simulated acoustic scattering problems, as justified in Section \ref{sec:inverseScattering}. Unless otherwise noted, we use a wavenumber of $k=8$ and a circular multistatic measurement geometry centered at the origin with radius of 4 units.

The first example in Figure \ref{fig:th_3points} shows the reconstruction of three small circular unknowns whose true locations are $(0,1.5)$, $(1,0)$, and $(-1,-1)$. For this example, we take SNR=1000 indicating very low noise and measure on 25 tranceivers. The DSM and both sparse DSM algorithms perform well here, in particular the error-DSM - note the logarithmic scale of the plotted values. Since these are small scatterers, we used $\mathbf{I}^{(0)}(z)$ as an indicator functional here.  

\begin{figure}[tbhp]
\centering
\subfloat[DSM]{\includegraphics[height=1.5in]{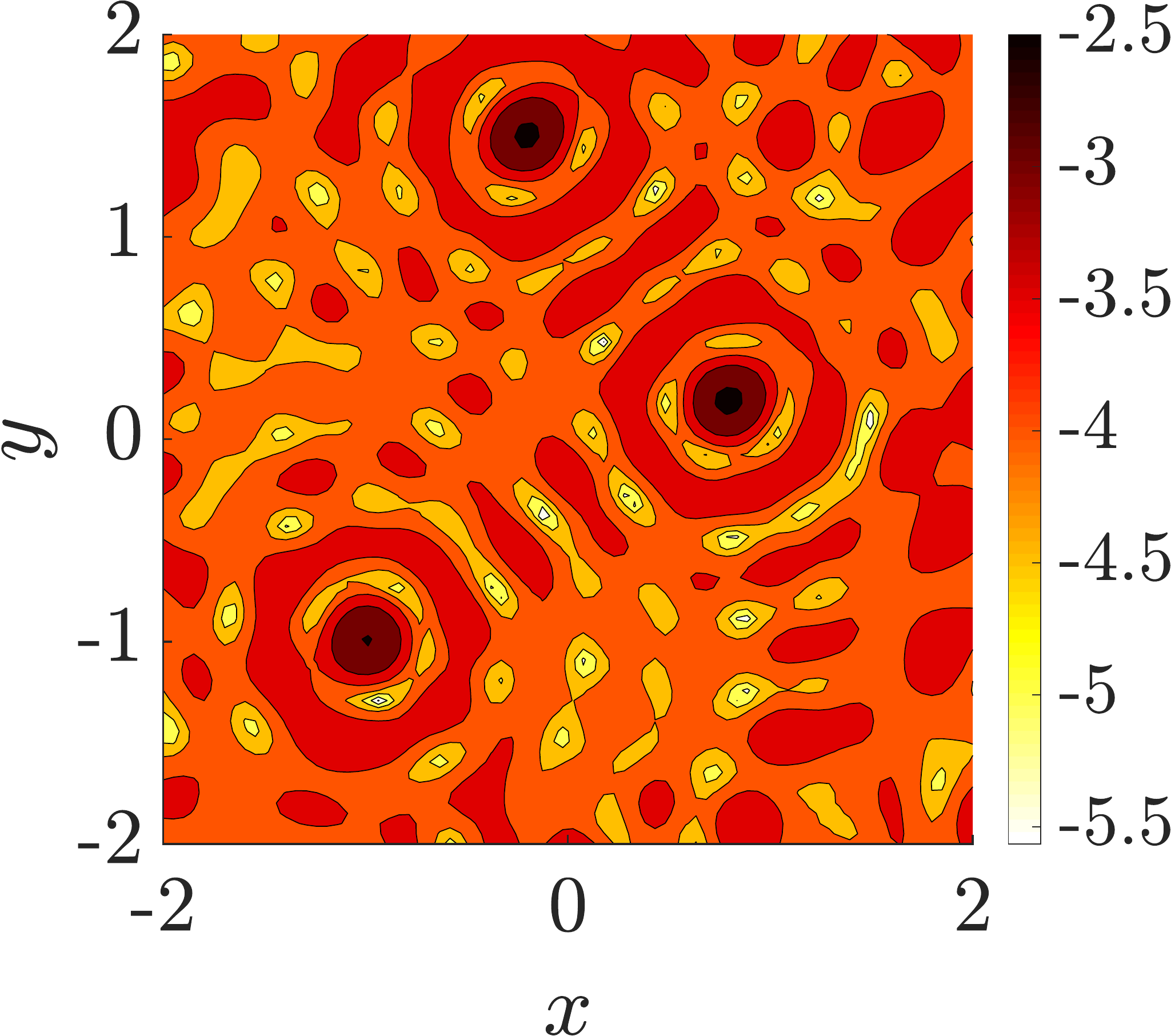}\label{fig:th_3points_DSM}}\hfill
\subfloat[$k$-DSM, $k_{\text{DSM}}=5$]{\includegraphics[height=1.5in]{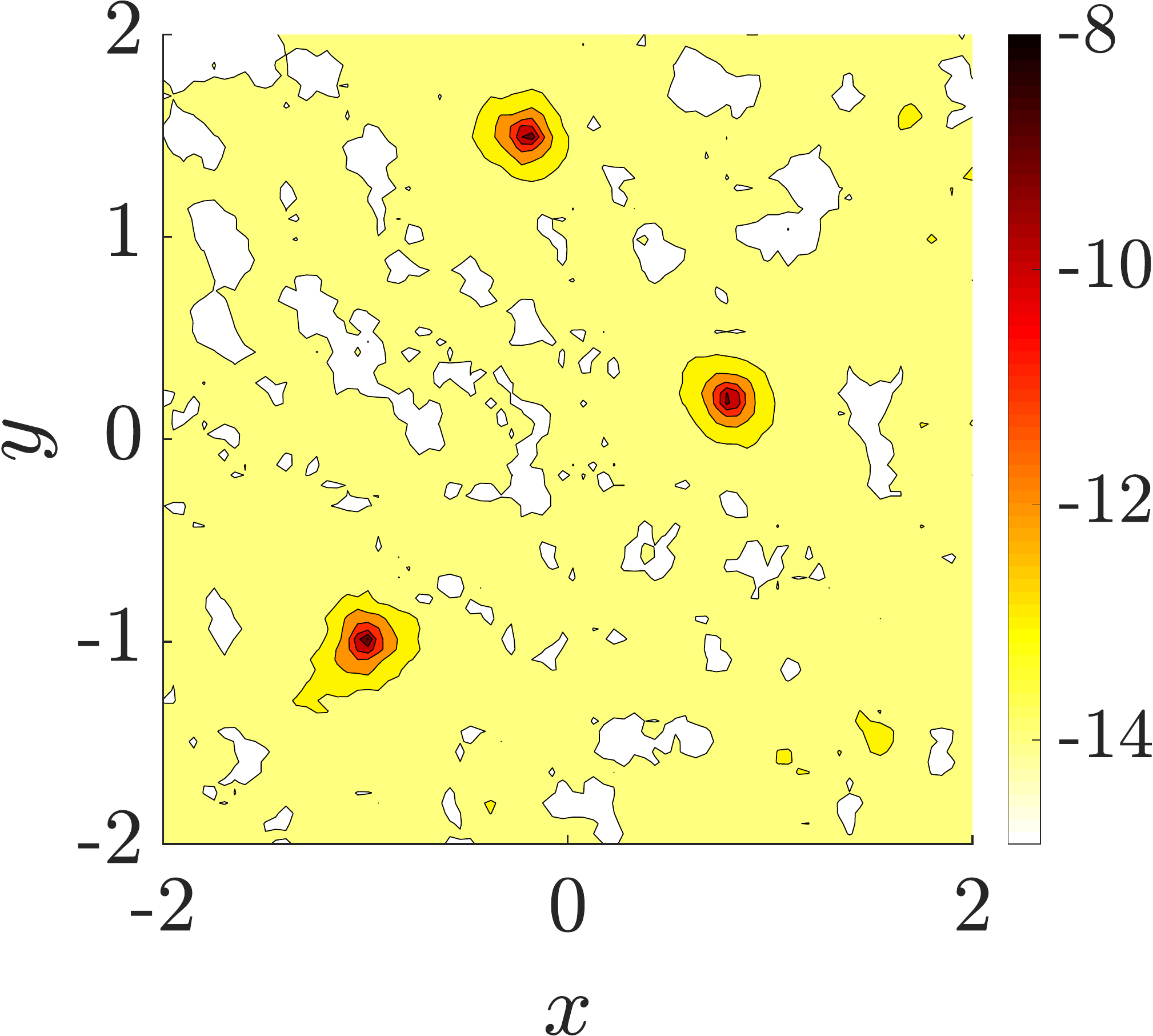}\label{fig:th_3points_kDSM}}\hfill
\subfloat[error-DSM, $\delta_{\text{DSM}}=5\times 10^{-7}$]{\includegraphics[height=1.5in]{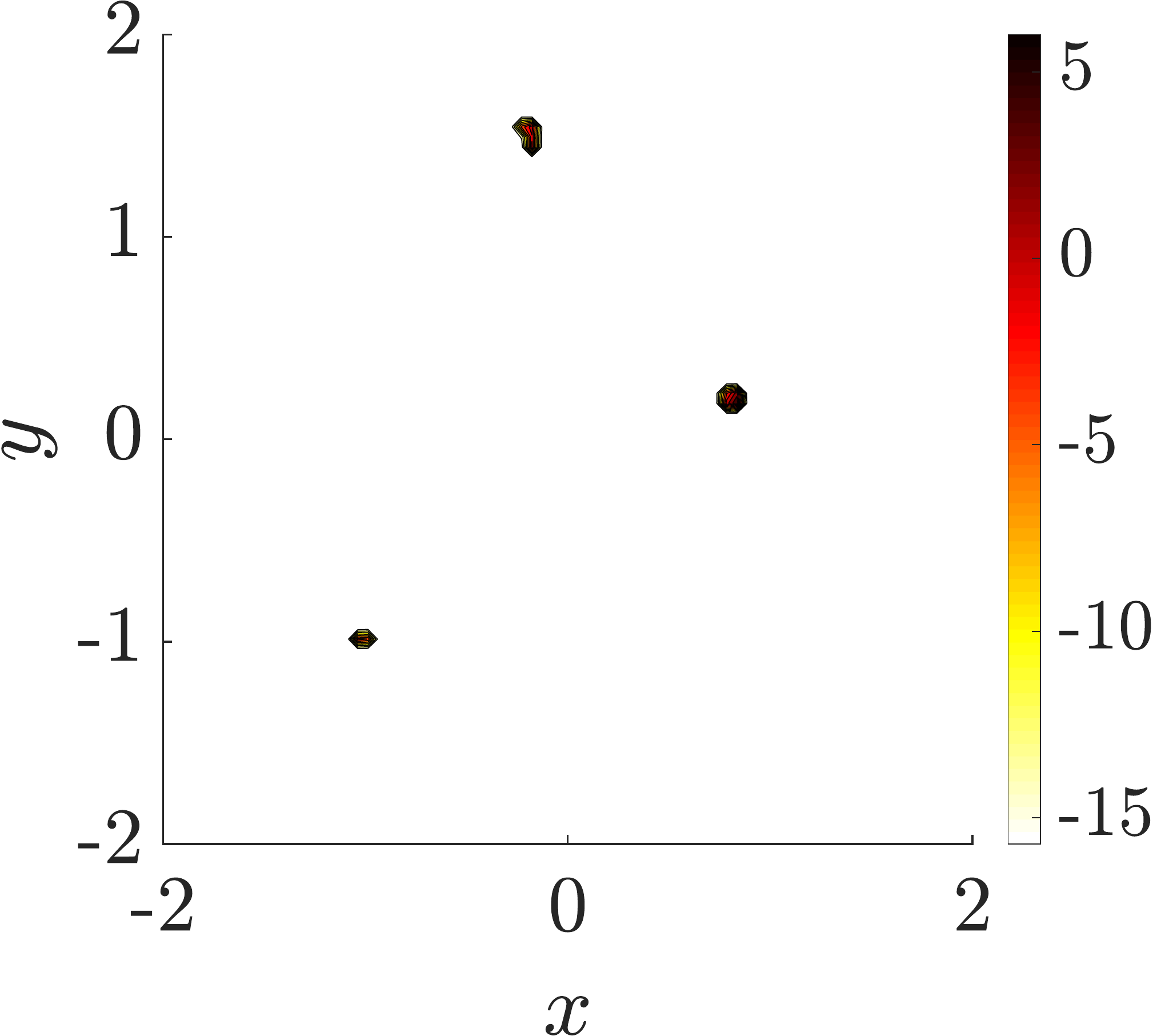}\label{fig:th_3points_errorDSM}}
\caption{Reconstructions of three point scatterers.}\label{fig:th_3points}
\end{figure}

We next demonstrate the performence of the sparse-DSM with indicator funcitonal $\mathbf{I}^{(0)}(z)$ under limited aperture measurements in Figure \ref{fig:th_la_5points}. The theoretical justification provided in Section \ref{sec:inverseScattering} does not apply in this simulation because the measurement surfaces are open curves. Indeed, while the sparse-DSM performs well in this experiment, there are \textit{false negative} reconstructions, which would not be expected if the theory developed above held fully. The are five small scatterers of different sizes and shapes to be reconstructed in this example are indicated by solid black lines. Here we take SNR=100 and 10 measurements. In Figure \ref{fig:th_la_5points_half}, measurements are taken on the half-circle of radius $4$ centered at zero below the $x$-axis. In Figure \ref{fig:th_la_5points_quarter}, measurements are taken on the quarter-circle of radius $4$ whose arc is centered on $(0,-4)$. We see that in the half-circle measurement case, all 5 obstacles are identified. The reconstruction is worse for the quarter-circle measurement case. Nonetheless, 2 scatterers are robustly identified and smeared versions of two others are reconstructed. One scatterer is missed completely. Considering the difficulty of most qualitative methods to reconstruct scatterers in moderate-noise limited-aperture scenarios, we see even the quarter-circle reconstruction as a success. 

\begin{figure}[tbhp]
\centering
\subfloat[$k$-DSM with half-circle aperture, $k_{\text{DSM}}=6$]{\includegraphics[height=1.5in]{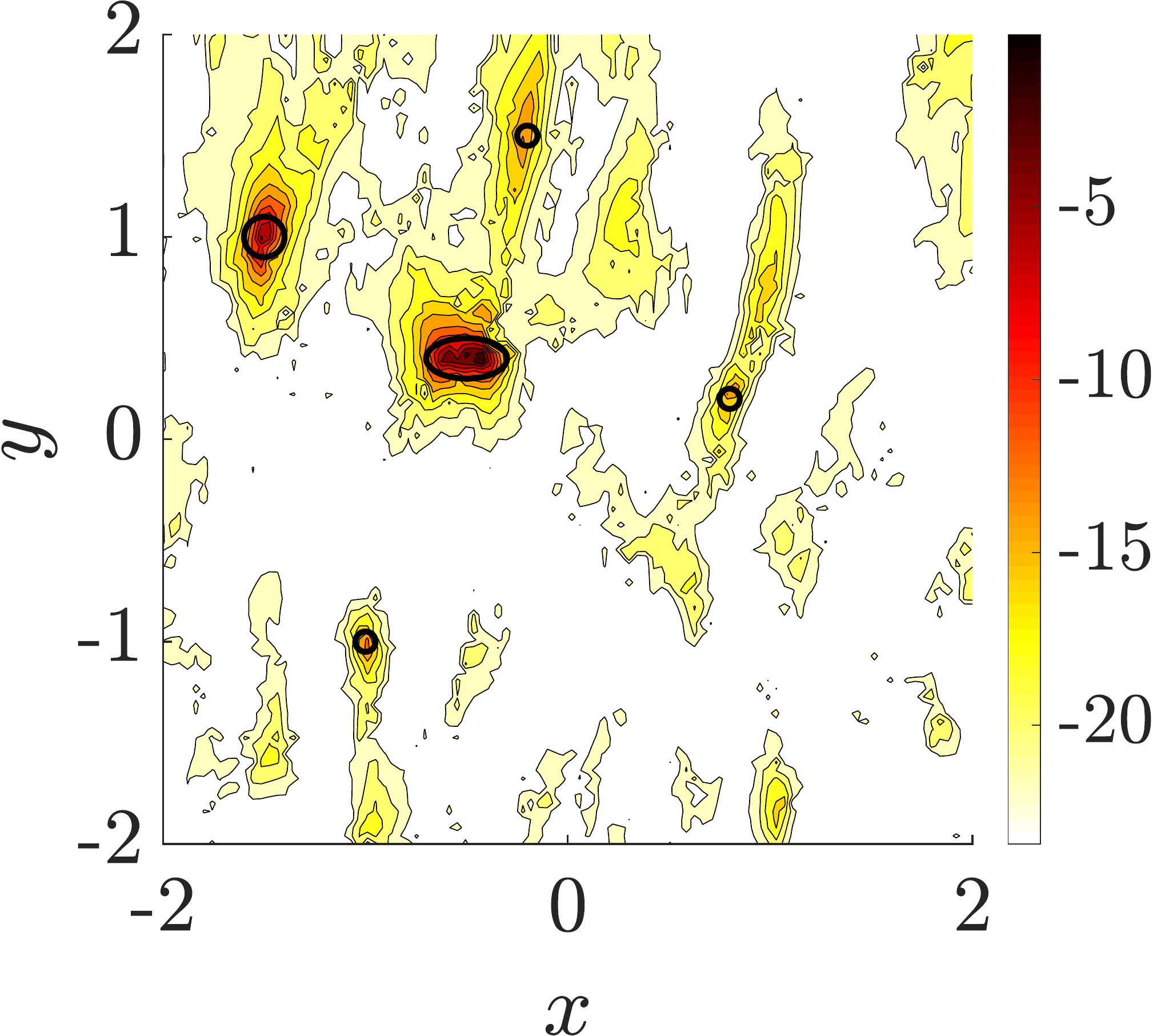}\label{fig:th_la_5points_half}}\qquad\qquad\qquad
\subfloat[$k$-DSM with quarter-circle aperture, $k_{\text{DSM}}=5$]{\includegraphics[height=1.5in]{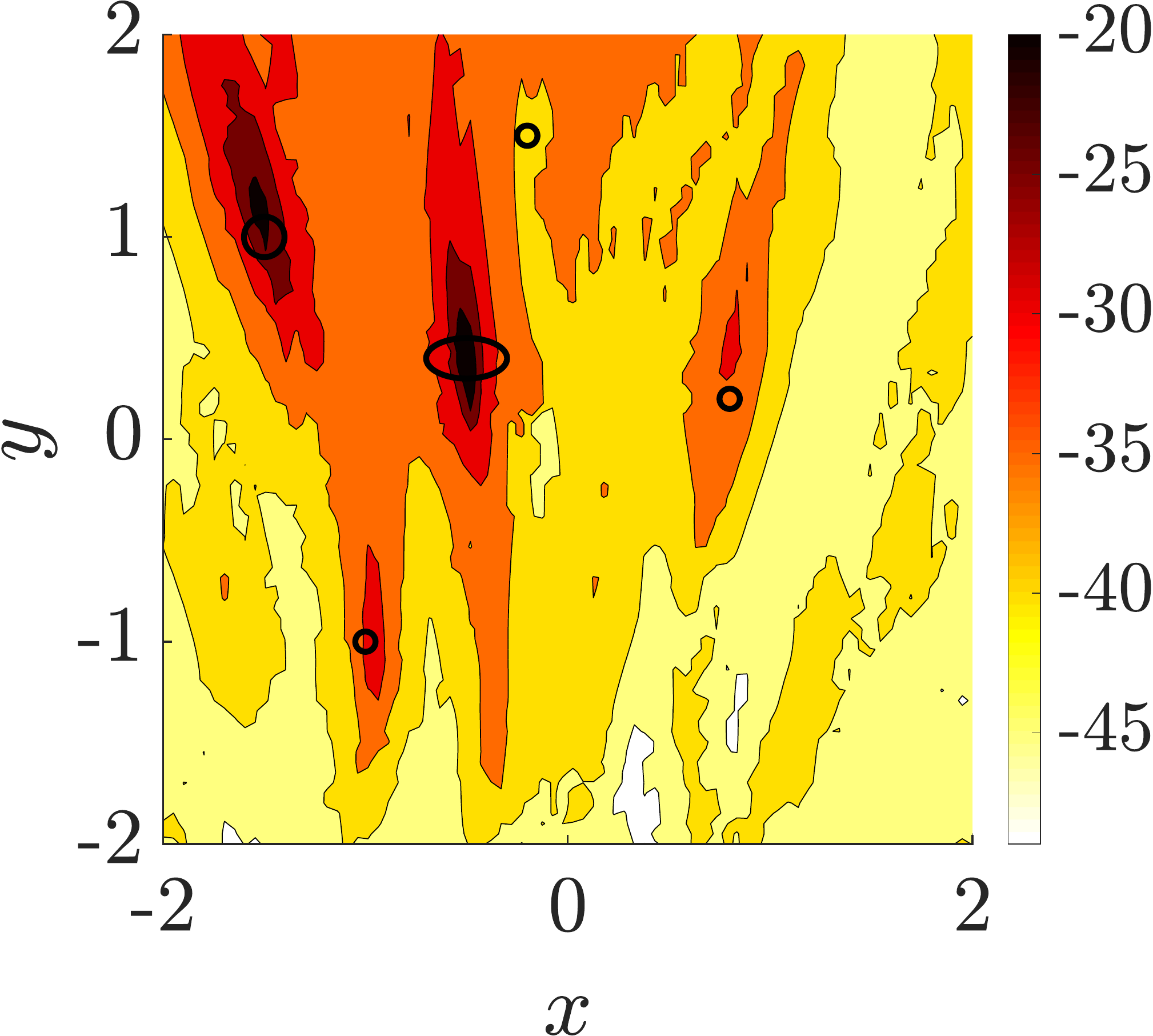}\label{fig:th_la_5points_quarter}}\\
\caption{Reconstructions of five small scatterers with a limited aperture measurement geometry.}\label{fig:th_la_5points}
\end{figure}

Finally, we turn to a case with scatterers of significantly-different sizes with the indicator function $\mathbf{I}^{(1)}$. Figures \ref{fig:weirdShape:DSM}-\ref{fig:weirdShape:errorDSM} show reconstructions of the shape in Figure \ref{fig:weirdShape:shape} from 40 multistatic transceiver with $SNR=1000$ and $k=12$. The indicator function was evaluated on $\mathcal{Z}=[-2,2]\times[-2,2]$, but we focus on $[-0.75,1.2]\times[-0.5,1]$ to show more detail. Both sparse-DSM algorithms are able to capture the multi-scale behavior of the object while the standard DSM misses the smaller circle.

\begin{figure}[tbhp]
    \centering
    \subfloat[True locations]{\includegraphics[height=1.5in]{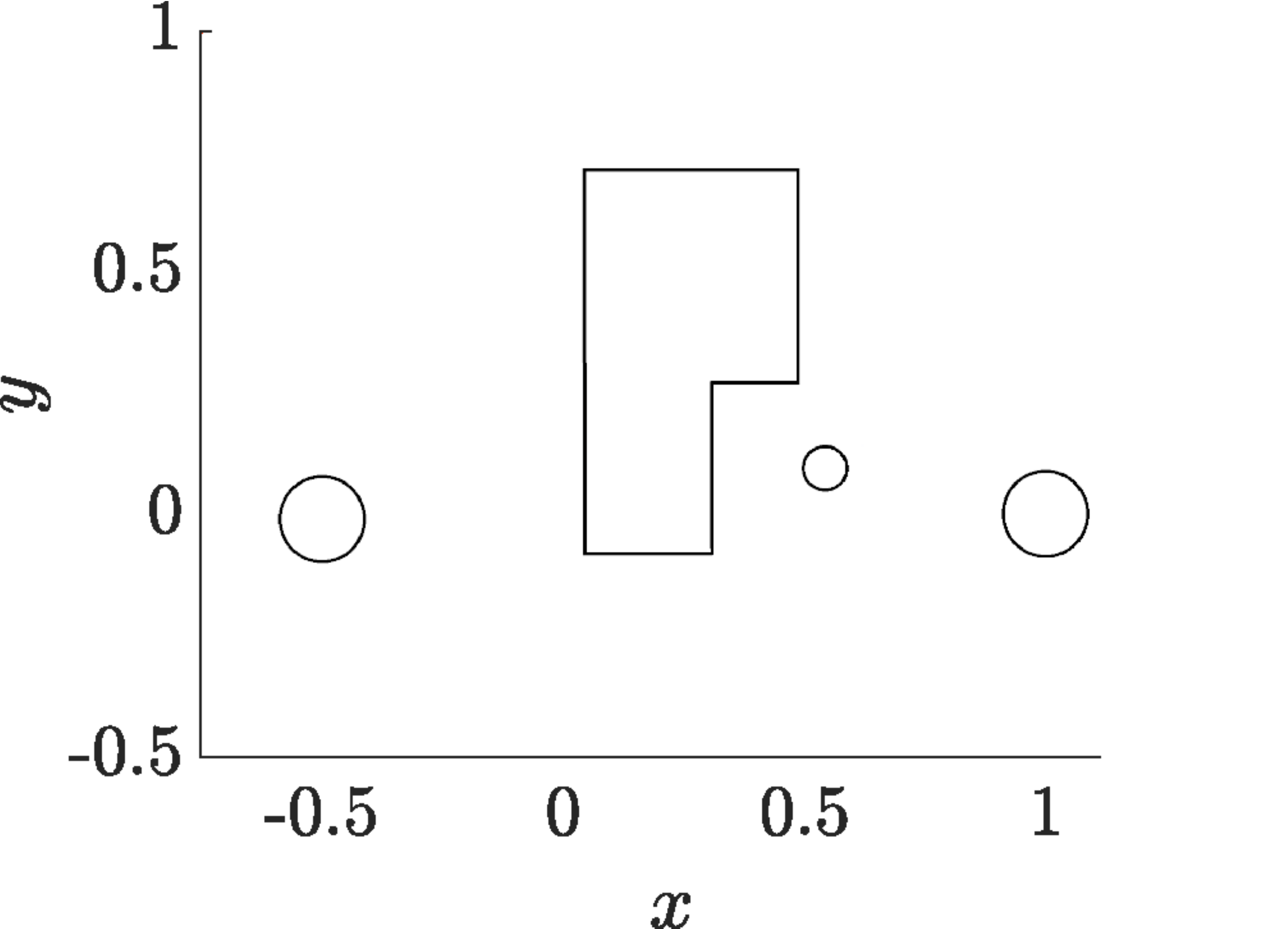}\label{fig:weirdShape:shape}}\qquad\qquad\qquad
    \subfloat[DSM]{\includegraphics[height=1.5in]{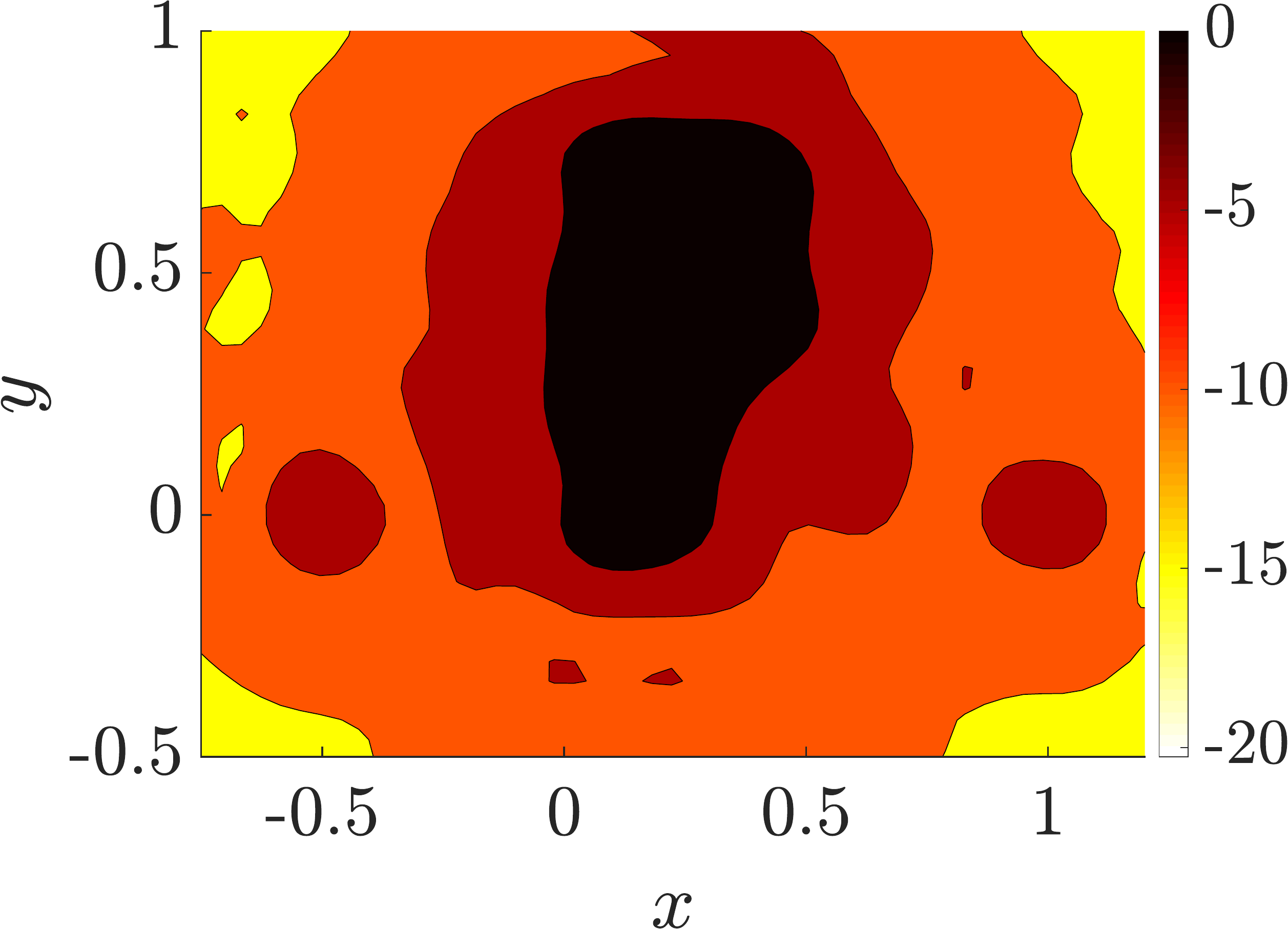}\label{fig:weirdShape:DSM}}\\
    \subfloat[$k$-DSM, $k_{\text{DSM}}=4$]{\includegraphics[height=1.5in]{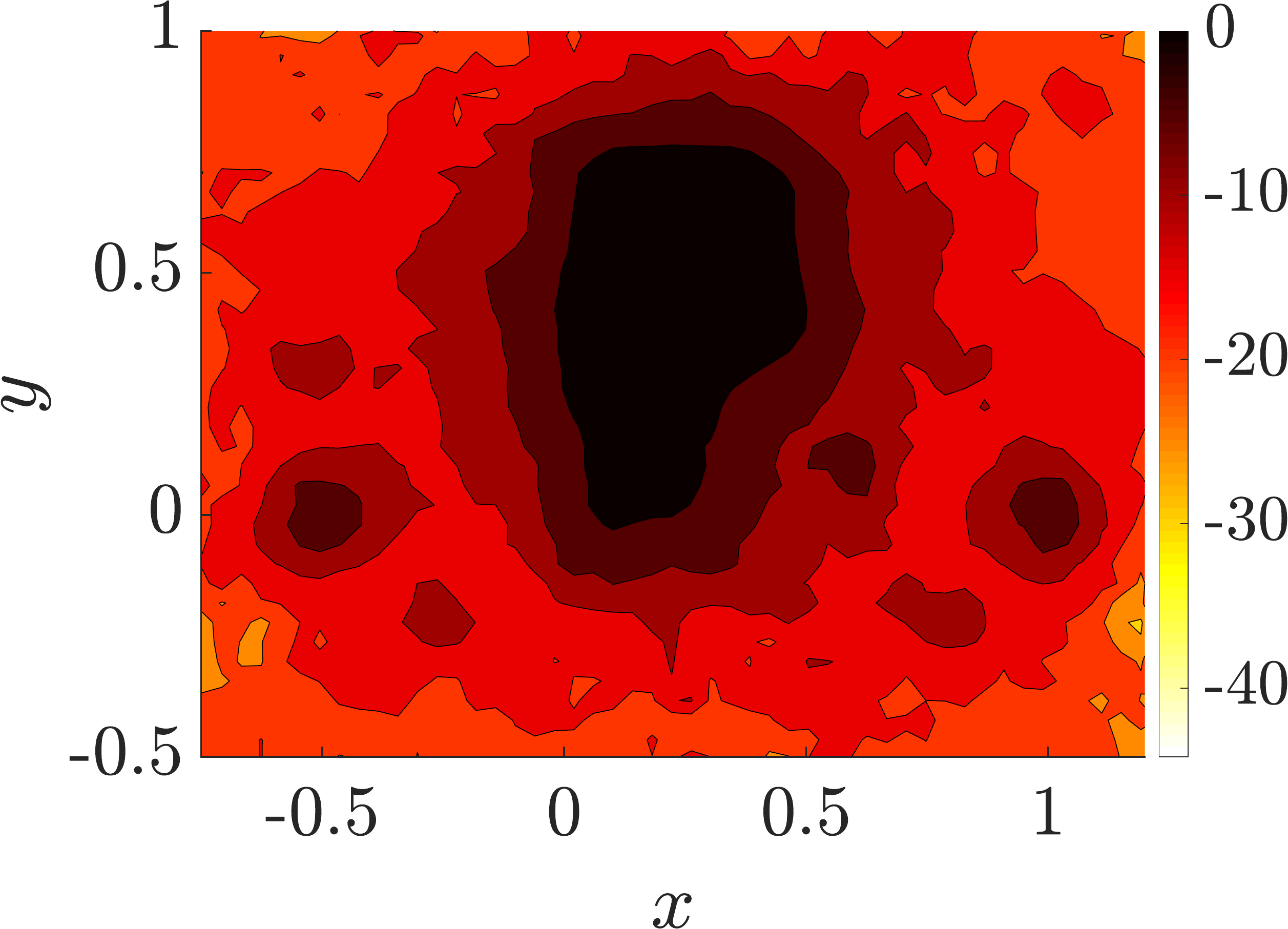}\label{fig:weirdShape:KDSM}}\qquad\qquad\qquad
    \subfloat[error-DSM, $\delta_{\text{DSM}}=0.6$]{\includegraphics[height=1.5in]{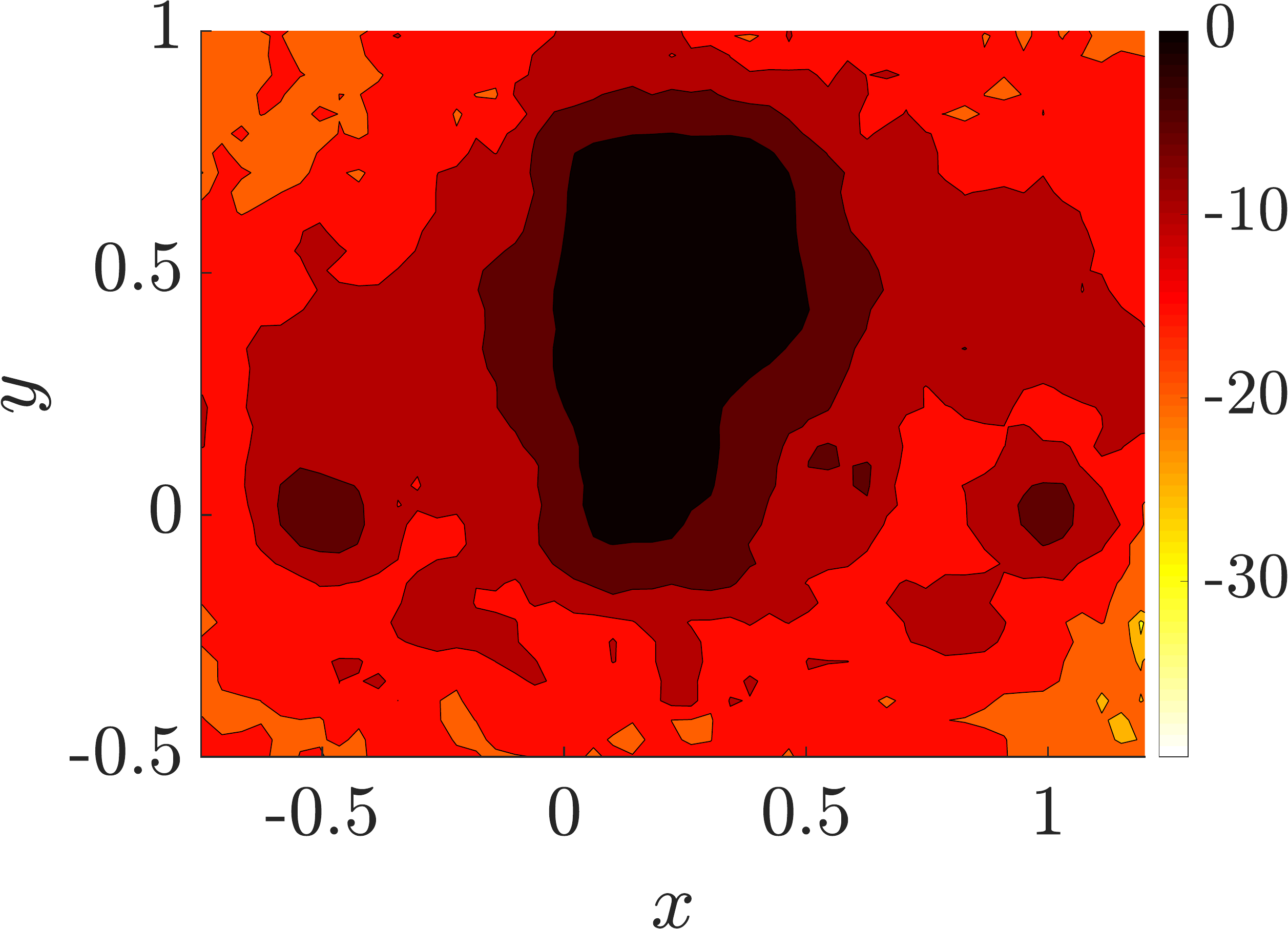}\label{fig:weirdShape:errorDSM}}
    \caption{Reconstructions of a complex geometry with multiscale obstacles.}\label{fig:weirdShape}
\end{figure}

\section{Conclusions}\label{sec:conclusion}
We have presented a new technique for estimating unknown parameters in physically-important problems without the need of a computationally-expensive forward simulation. 
The main benefits of the new technique are 1) that it is flexible with respect to application, particularly in problems relating to inversion of propagating waves in an unknown medium; 
2) it produces high-resolution estimates even for challenging situations with low-diversity of measurements; and 3) it provides a link between two inversion techniques, one with a solid theoretical background but difficult implementation and one with less theory but better practical benefits. In the future, we aim to develop a data-driven selection criterion for the sparsity parameters $k_{\text{DSM}}$ and $\delta_{\text{DSM}}$. We also plan to apply the sparse-DSM to more problems in inverse scattering, such as time-domain inversion.

There is a connection between inversion techniques for the AOA problem and the inverse scattering problem: indeed, as discussed above, this connection 
relates Capon's method and beamforming in AOA estimation to the inf-criterion and DSM in inverse scattering. The similarities between MUSIC and the linear sampling and factorization methods have been discussed in previous literature as well \cite{Cheney2001, Devaney2000, Kirsch2002}. The interplay between these problems presents an opportunity for researchers in either field to make contributions to the other, and we recommend researchers exploit these similarities to improve estimation techniques across the board.

\section*{Acknowledgments}
Thanks to B.F. Jamroz, R.W. Leonhardt, J.T. Quimby, K.A. Remley, P.G. Vouras, A.J. Weiss, and D.F. Williams for providing the data and pre-processing code for the experiments in Section \ref{sec:measuredAOA} and for providing insight on this problem. Thanks to A. Dienstfrey, A. Feldman, P.D. Hale, B.F. Jamroz, and D.F. Williams for a careful reading of this manuscript.

\end{document}